\documentclass[11pt,reqno,oneside]{amsart}
\usepackage{amscd}
\usepackage{amsmath}
\usepackage{latexsym}
\usepackage{amsfonts}
\usepackage{amssymb}
\usepackage{amsthm}
\usepackage{graphicx}
\usepackage{verbatim}
\usepackage{mathrsfs}
\usepackage{enumerate}
\usepackage{hyperref}
\usepackage{fullpage}
\usepackage{xcolor}

 \oddsidemargin .5cm \evensidemargin .5cm \marginparwidth 40pt
 \marginparsep 10pt \topmargin 0.32cm
 \headsep1pt
 \headheight 0pt
 \textheight 9.1in
 \textwidth 6in
 \sloppy

 \setlength{\parskip}{8pt}

\theoremstyle{plain}
\theoremstyle{plain}\newtheorem{theorem}{Theorem}[section]
\theoremstyle{plain}\newtheorem{lemma}[theorem]{Lemma}
\theoremstyle{plain}
\theoremstyle{plain}
\theoremstyle{plain}

\newcommand{\R}{\mathbb{R}}
\newcommand{\be}{\begin{equation}}
\newcommand{\ee}{\end{equation}}
 \newcommand{\ba}{\begin{aligned}}
 \newcommand{\ea}{\end{aligned}}

  \newcommand{\ben}{\begin{enumerate}}
   \newcommand{\een}{\end{enumerate}}

\newcommand{\Rmnum}[1]{\expandafter\@slowromancap\romannumeral #1@}
\allowdisplaybreaks

\begin{document}
\title{An Extension of Calder$\acute{\rm O}$n-Zygmund type singular integral}

\author[Q. Jiu, D. Li, H. Yu]{ Quansen Jiu$^{1}$, Dongsheng Li$^{2}$, Huan Yu$^{3}$,}

\address{$^1$ School of Mathematical Sciences, Capital Normal University, Beijing, 100048, P.R.China}

\email{jiuqs@cnu.edu.cn}

\address{$^2$  College of Science, Xi'an Jiaotong University, Xi'an 710049, P.R.China}

\email{lidsh@mail.xjtu.edu.cn}

\address{$^3$  School of Applied Science, Beijing Information Science and Technology University, Beijing, 100192, P.R.China}

\email{huanyu@bistu.edu.cn}

\date{\today}
\subjclass[2000]{ 42B20, 42B37, 35Q35}
\keywords{Calder$\acute{\rm o}$n-Zygmund, singular integral}

\begin{abstract}

In this paper,  we  consider  a kind of singular integral which can be viewed as an extension of the classical Calder$\acute{\rm o}$n-Zygmund type singular integral. We establish an estimate of the  singular integral in the  $L^q$ space for $1<q<\infty$. In particular, the  Calder$\acute{\rm o}$n-Zygmund estimate 
 can be recovered from our obtained estimate. The proof of our main result is via the so called "geometric approach", which was applied in \cite{CP} on the $L^q$ estimate of the elliptic equations and in \cite{LW,Wang} on a new proof of the the   Calder$\acute{\rm o}$n-Zygmund estimate.

\end{abstract}
\smallskip
\maketitle



\section{Introduction and Main Results}
 The classical Calder$\acute{\rm o}$n-Zygmund type singular integral is defined as
\begin{equation}\label{4-9-17}
I_\varepsilon(f)(x)=\int_{|y|\ge\varepsilon}\frac{\Omega(y)}{|y|^n}f(x-y) dy
\end{equation}
for any $\varepsilon>0$ and $f\in L^q(\mathbb{R}^n), 1\le q<\infty$, where the function
 $\Omega: \mathbb{R}^n \rightarrow  \mathbb{R}$  is bounded and homogeneous of degree $0$, and satisfies  the cancellation property and "Dini-type" smoothness property. More precisely, there exist two positive constants $B_1$ and $B_2$ such that for any $r>0$ and $x\in  \mathbb{R}^n\setminus \{0\}$, $\Omega$ satisfies
\begin{equation}\label{4-9-15}
\left\{
\begin{array}{lll}
 & |\Omega(x)|\le B_1, \ &(\rm bounded) \\[3mm]
 &\Omega(rx)=\Omega(x),\ & (\rm homogeneous\  of\  degree \ 0)\\[3mm]
&\int_{\mathbb{S}^1} \Omega(x) d\sigma =0, \ &(\rm cancellation)\\[3mm]
& \int_0^1\frac{\omega(\delta)}{\delta} d\delta=B_2,  \ &(\rm Dini\ type \ continuous)
\end{array}
\right.
\end{equation}
where $\mathbb{S}^1=\{x\in \mathbb{R}^n: |x|=1\}$ is the unit sphere in $\mathbb{R}^n$ and $\omega$ is defined as
\begin{equation}\label{4-9-16}
\omega(\delta)=\sup\{|\Omega(x)-\Omega(x')|: |x-x'|\le \delta, |x|=|x'|=1\}.
\end{equation}
It is clear that Riesz transform is a specific example of the Calder$\acute{\rm o}$n-Zygmund singular integral if we take $\Omega(x)=\frac{x_j}{|x|}$ for $j=1,2,\cdots,n$ respectively.

Under assumptions \eqref{4-9-15},  the well-known Calder$\acute{\rm o}$n-Zygmund  estimate   reads as (see \cite{CZ, stein})
\begin{theorem}\label{theorem1}
Suppose $1<q<\infty$. There exists a constant $A$ depending only on $n, q, B_1$ and $B_2$ such that $\|I_\varepsilon f\|_q\le A\|f\|_q$ for any $\varepsilon>0$ and $f\in L^q(\mathbb{R}^n)$.
\end{theorem}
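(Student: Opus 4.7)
The plan is to follow the classical three-step route for $L^q$ boundedness of singular integrals: first establish a uniform $L^2$ bound via the Fourier transform, then upgrade to a weak-type $(1,1)$ estimate using a Calder\'on--Zygmund decomposition, and finally interpolate (and dualize) to cover the full range $1<q<\infty$. Writing $I_\varepsilon f=K_\varepsilon*f$ with $K_\varepsilon(y)=\Omega(y)|y|^{-n}\mathbf{1}_{\{|y|\ge\varepsilon\}}$, all three steps reduce to structural properties of the kernel that follow from the hypotheses \eqref{4-9-15}.

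For the $L^2$ step, I would show that $\widehat{K_\varepsilon}$ is bounded uniformly in $\varepsilon$, with constant depending only on $n,B_1,B_2$. The standard device is to split the defining integral at the scale $|\xi|^{-1}$: on $\{\varepsilon\le|y|\le|\xi|^{-1}\}$ the cancellation of $\Omega$ on the sphere allows one to replace $e^{-2\pi i y\cdot\xi}$ by $e^{-2\pi i y\cdot\xi}-1$ and gain a factor $|y\cdot\xi|$, which compensates for the homogeneity; on $\{|y|>|\xi|^{-1}\}$ one decomposes into dyadic annuli and pairs antipodal points, producing a difference $\Omega(y)-\Omega(y+\pi\xi/|\xi|^2)$ controlled by the modulus $\omega$, whose dyadic sum converges thanks to the Dini condition $\int_0^1\omega(\delta)\delta^{-1}\,d\delta<\infty$.

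For the weak $(1,1)$ step, given $\alpha>0$ and $f\in L^1\cap L^2$, apply the Calder\'on--Zygmund decomposition to write $f=g+b$ with $\|g\|_\infty\le C\alpha$, $\|g\|_1\le\|f\|_1$, and $b=\sum_j b_j$ supported on pairwise disjoint dyadic cubes $Q_j$, each $b_j$ mean-zero with $\|b_j\|_1\le C\alpha|Q_j|$, and $\sum_j|Q_j|\le\|f\|_1/\alpha$. Chebyshev together with Step 1 handles $g$. For $b$, one estimates $\int_{\mathbb{R}^n\setminus 2Q_j}|I_\varepsilon b_j|$ by exploiting the mean-zero property together with the H\"ormander regularity estimate
\[
\int_{|y|\ge 2|z|}|K_\varepsilon(y-z)-K_\varepsilon(y)|\,dy\le C(n,B_1,B_2)\quad\text{for all }z\ne 0,
\]
which is verified by splitting the kernel difference into a radial piece (dominated by $B_1|z|/|y|$, using homogeneity and $|\Omega|\le B_1$) and an angular piece (dominated by $\omega(c|z|/|y|)/|y|^n$, integrable in $|y|$ thanks to the Dini hypothesis after the substitution $t=|z|/|y|$).

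Marcinkiewicz interpolation between the $L^2$ bound and the weak $(1,1)$ bound gives the conclusion for $1<q\le 2$, and the range $2\le q<\infty$ follows by duality, since the adjoint of $I_\varepsilon$ is convolution with $\widetilde{K_\varepsilon}(y)=\Omega(-y)|y|^{-n}\mathbf{1}_{\{|y|\ge\varepsilon\}}$, and $\widetilde\Omega(y)=\Omega(-y)$ satisfies \eqref{4-9-15} with the same constants. The main technical obstacle is Step 1: the Fourier estimate must be genuinely uniform in $\varepsilon$, so one cannot absorb the truncation into a smooth cutoff, and the interplay between cancellation (invoked near the origin) and the Dini modulus (invoked at infinity) must be arranged delicately so that the total bound depends on $B_1$ and $B_2$ only through an explicit combination independent of $\varepsilon$ and $\xi$.
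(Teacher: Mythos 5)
Your proposal is correct, but it follows the \emph{classical} Calder\'on--Zygmund route from \cite{CZ, stein}, which is not the approach taken in this paper. The paper does not give its own proof of Theorem \ref{theorem1} as an isolated statement; instead it cites the classical proof and then, for the more general operator $T_\varepsilon$ of \eqref{T-operator} (which reduces to $I_\varepsilon$ as $\beta\to 0$), it deploys the ``geometric approach'' of Caffarelli--Peral \cite{CP} and Li--Wang \cite{LW, Wang}. Both your argument and the paper's begin with the same $L^2$ estimate via a uniform bound on the Fourier multiplier $\widehat{K_\varepsilon}$ (your Step 1 is the analogue of Lemma \ref{uni-est21}), but they diverge immediately afterwards. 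You proceed through the weak $(1,1)$ bound (Calder\'on--Zygmund decomposition plus the H\"ormander condition), then Marcinkiewicz interpolation for $1<q\le 2$, then duality for $q\ge 2$. The paper instead establishes a pointwise comparison of maximal functions on cubes (Lemma \ref{MainLemma}): if $\mathcal{M}(|T_1 f|^2)$ and $\mathcal{M}(f^2)$ are controlled at one point of $3Q$ and ${\rm supp}\,f\subset\mathbb{R}^n\setminus 4Q$, then $\mathcal{M}(|T_1 f|^2)$ is controlled on all of $Q$; this feeds into a good-$\lambda$ inequality (Lemmas \ref{MainLemma2}--\ref{MainLemma5}) that, integrated against $t^{q/2-1}$, yields the strong $(q,q)$ bound \emph{directly} for $2<q<\infty$, with duality then handling $1<q<2$. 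So the direction of the bootstrapping from $L^2$ is reversed: you dualize upward from $(1,2]$, the paper dualizes downward from $[2,\infty)$. What the geometric approach buys is that it sidesteps the weak $(1,1)$ estimate and interpolation entirely and replaces them with a measure-theoretic covering argument built on the $L^2$ bound, which turns out to be more robust when one perturbs the kernel (here, introduces the extra homogeneity $\beta$): the paper needs to track the $\beta$-dependence of every constant, and Lemma \ref{MainLemma} isolates exactly where the smooth cutoff $\chi_\beta$ and the splitting $T=T_1+T_2$ enter. Your argument would also work for Theorem \ref{theorem1} itself, but adapting it to $T_\varepsilon$ with uniform constants would require re-proving the weak $(1,1)$ estimate with explicit $\beta$-dependence; the paper's good-$\lambda$ machinery localizes that difficulty to a single kernel-regularity lemma.
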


The original proof of Theorem  \ref{theorem1} is classical and can be found in \cite{CZ, stein}. Motivated by the proof in \cite{CP} on $L^q$ estimates of elliptic equations,  Li and Wang \cite{LW} presented a new proof of Theorem \ref{theorem1}, which is so called "geometric approach" (see \cite{Wang}). One of new  ingredients in the proof  in \cite{LW} lies in that it is proved directly that $I_\varepsilon$ is of strong type $(q,q)$ for $2<q<\infty$, based on the strong type (2,2) estimate of $I_\varepsilon$, which is different from the  original  one for $1<q<2$ in \cite{CZ, stein}.

In this paper,  we consider the following singular integral
\begin{equation}\label{T-operator}
T_\varepsilon(f)(x)=\int_{|y|\ge\varepsilon}\frac{\Omega(y)}{|y|^{n-\beta}}f(x-y) dy
\end{equation}
for any $\varepsilon>0$ and $f\in  L^1(\mathbb{R}^n)\cap L^q(\mathbb{R}^n), 1\le q<\infty$ and $0< \beta<n$, where $\Omega$ is same as in \eqref{4-9-17}. Formally,  $T_\varepsilon$ becomes the Calder$\acute{\rm o}$n-Zygmund type singular integral $I_\varepsilon$ when $\beta=0$.

The main aim of this paper is to obtain a uniform $L^q (1<q<\infty)$ estimate of $T_\varepsilon$ with respect to $\beta>0$ such that the strong type $(q,q)$ estimate to the  Calder$\acute{\rm o}$n-Zygmund type singular integral $I_\varepsilon$ can be recovered when $\beta\to 0$. Our main result can be stated as
\begin{theorem}\label{theorem2}
Let $0<\beta_0<\frac 12$ be any fixed and small number. Then for any $f\in L^1(\mathbb{R}^n)\cap L^q(\mathbb{R}^n)$ with $1< q <\infty$, there exists an absolute constant $C$ depending on $n, q, B_1, B_2$ and $\beta_0$ such that
\begin{equation}\label{4-11-10}
\begin{split}
\|T_\varepsilon f\|_q\le C\big(\|f\|_q+\frac{\beta^{\frac{(q-1)n}{q}}}{\sqrt[q]{(n(q-1)-\beta q)}}\|f\|_1\big)
\end{split}
\end{equation}
holds  uniformly for $\varepsilon>0$ and  $0<\beta<\min\{1-\beta_0,\frac{(q-1)n}{q}\}$.
\end{theorem}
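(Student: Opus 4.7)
The plan is to split $T_\varepsilon$ at the scale $R=1/\beta$:
\[
T_\varepsilon f(x)=\int_{\varepsilon\le |y|\le 1/\beta}\frac{\Omega(y)}{|y|^{n-\beta}}f(x-y)\,dy+\int_{|y|\ge 1/\beta}\frac{\Omega(y)}{|y|^{n-\beta}}f(x-y)\,dy=:T^{\mathrm{near}}_\varepsilon f(x)+T^{\mathrm{far}} f(x),
\]
so that the two pieces carry the $\|f\|_q$-term and the $\|f\|_1$-term in \eqref{4-11-10} respectively. The truncation radius $R=1/\beta$ is dictated by the exponent $\beta^{n(q-1)/q}$ appearing in the target inequality.

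For $T^{\mathrm{far}}$ I plan to apply Young's convolution inequality directly. Since $\beta<n(q-1)/q$, the kernel $\Omega(y)|y|^{\beta-n}\chi_{\{|y|\ge 1/\beta\}}$ lies in $L^q$, and a polar-coordinate computation yields
\[
\|T^{\mathrm{far}}f\|_q\le B_1\bigl\||y|^{\beta-n}\chi_{\{|y|\ge 1/\beta\}}\bigr\|_q\|f\|_1\le \frac{C\,\beta^{n(q-1)/q-\beta}}{(n(q-1)-\beta q)^{1/q}}\|f\|_1.
\]
The restriction $\beta\in(0,1-\beta_0)$ makes $\beta^{-\beta}\le e^{1/e}$, which absorbs into the constant and delivers the $\|f\|_1$ contribution in \eqref{4-11-10}.

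The substantive step is to prove $\|T^{\mathrm{near}}_\varepsilon f\|_q\le C\|f\|_q$ with $C$ independent of $\beta$ and $\varepsilon$. I plan to treat $T^{\mathrm{near}}_\varepsilon$ as a truncated singular integral with kernel $K_\beta(y):=\Omega(y)|y|^\beta|y|^{-n}\chi_{\{\varepsilon\le |y|\le 1/\beta\}}$, which inherits the cancellation property from $\Omega$, satisfies the size bound $|K_\beta(y)|\le C|y|^{-n}$ (since $|y|^\beta\le \beta^{-\beta}\le e^{1/e}$ throughout its support), and -- this is the key point -- admits a Dini/H\"ormander-type oscillation bound uniform in $\beta$ and $\varepsilon$. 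Granting these, I would follow the geometric approach of \cite{LW}: first establish the $L^2$-estimate via Plancherel and a dyadic/cancellation-based Fourier estimate on $\widehat{K_\beta}$, and then run the cube-decomposition/good-$\lambda$ iteration of \cite{LW} to promote it directly to strong $(q,q)$ for $2<q<\infty$. The range $1<q<2$ would follow by duality, since the formal adjoint of $T^{\mathrm{near}}_\varepsilon$ has the same structure with $\Omega(y)$ replaced by $\Omega(-y)$, which satisfies \eqref{4-9-15} with the same constants.

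The main obstacle will be verifying the uniform H\"ormander condition for $K_\beta$, i.e.
\[
\int_{|y|>2|z|}|K_\beta(y-z)-K_\beta(y)|\,dy\le C(n,B_1,B_2,\beta_0).
\]
The $\Omega(y-z)-\Omega(y)$ contribution reduces, after changing variables, to an integral of the form $(C|z|)^\beta\int_{C|z|\beta}^{C/2}\omega(s)\,s^{-\beta-1}\,ds$, which on its face blows up as $\beta\to 0$; the saving comes from the monotone estimate $s^{-\beta}\le (C|z|\beta)^{-\beta}$ on the relevant range, which combines with the prefactor $(C|z|)^\beta$ to leave $\beta^{-\beta}B_2\le CB_2$. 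The $|y-z|^\beta-|y|^\beta$ contribution is handled by the mean value theorem together with the bound $|z|\le 1/(2\beta)$ implicit in $|y|>2|z|$ and $|y|\le 1/\beta$, while the boundary contributions from the indicator $\chi_{\{\varepsilon\le |y|\le 1/\beta\}}$ are analogous to those already treated in \cite{LW} for the classical truncated Calder\'on--Zygmund kernel.
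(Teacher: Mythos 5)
Your proposal is correct and follows essentially the same route as the paper: split $T_\varepsilon$ at scale $1/\beta$, bound the far piece by Young's inequality to produce the $\|f\|_1$ term, obtain the uniform $L^2$ bound for the near piece via Plancherel and a pointwise estimate of the kernel's Fourier transform (with the $\beta^{-\beta}$ saving you identify), then upgrade to $(q,q)$ for $q>2$ by the good-$\lambda$/cube-decomposition argument of \cite{LW} and handle $1<q<2$ by duality. The only cosmetic difference is your sharp indicator cutoff at $|y|=1/\beta$ in place of the paper's smooth $\chi_\beta$; the paper's choice lets it estimate the cutoff-difference term $I_3$ in Lemma~\ref{MainLemma} by the mean value theorem, whereas with your indicator that term becomes a thin-annulus boundary contribution, which is still controllable but requires a slightly different argument.
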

  In view of  Riesz potential (see \cite{stein}), it is direct to obtain that for any  $0<\beta<n$,
 \begin{equation}\label{5-11-1}
\begin{split}
\|T_\varepsilon f\|_q \le C(n, q, \beta)\|f\|_p, \ \ \frac 1q=\frac 1p-\frac{\beta}{n},
\end{split}
\end{equation}
where the constant $C(n,q,\beta)$ depends on $n, q$ and $\beta$ and the estimate \eqref{4-11-10} can be obtained by interpolation for $0<\beta<n$. However,  the constant $C(n,q,\beta)$  will become unbounded when $\beta\to 0$. It should be addressed  that the constant $C$ on the right hand of \eqref{4-11-10}  does not depend on $\beta$ and   the strong $(q,q)$ type estimate of the Calder$\acute{\rm o}$n-Zygmund type singular integral $I_\varepsilon$ can be recovered from  \eqref{4-11-10} when $\beta\to 0$. In this sense, the singular integral $T_\varepsilon$ in \eqref{T-operator} can be viewed as an extension of the Calder$\acute{\rm o}$n-Zygmund type singular integral. In particular, when taking $\Omega(x)=\frac{x_j}{|x|}$ for $j=1,2,\cdots,n$ respectively,  Yu and Jiu   \cite{YJ} proved
\begin{theorem}\label{theorem3}
Take $\Omega(x)=\frac{x_j}{|x|}$ for $j=1,2,\cdots,n$ in \eqref{T-operator} respectively. Let $0<\beta_0<\frac 12$ be any fixed and small number. Then for any $f\in L^1(\mathbb{R}^n)\cap L^q(\mathbb{R}^n)$ with $1< q <\infty$, there exists an absolute constant $C$ depending on $n, q$ and $\beta_0$ such that

(1) if $q=2$,  there holds
\begin{equation}\label{4-11-2}
\begin{split}
\|T_\varepsilon f\|_2\le C\big(\|f\|_2+\frac{\beta^{\frac n2}}{\sqrt{n-2\beta}}\|f\|_1\big)
\end{split}
\end{equation}
 for $0<\beta<1-\beta_0$;

(2) if $1<q<2$, there holds
\begin{equation}\label{3275}
\begin{split}
\|T_\varepsilon f\|_q\le C\big(\frac{1}{\sqrt[q]{(q-1)(2-q)}}\|f\|_q+\frac{1}{\sqrt[q]{q(n-\beta)^2-n^2}}\|f\|_p+\frac{\beta^{\frac{n(q-1)}{q}}}{\sqrt[q]{(n(q-1)-\beta q)}}\|f\|_1\big),
\end{split}
\end{equation}
where  $\frac{1}{q}=\frac{1}{p}(1-\frac{\beta}{n})$, $0<\beta<\min\{1-\beta_0,\frac{(q-\sqrt{q})n}{q}\}$;

(3) if $2<q<\infty$,  there holds
\begin{equation}\label{3276}
\begin{split}
\|T_\varepsilon f\|_{(L^{q'}\cap L^{p'})^*}
\le & C\max\{\frac{1}{\sqrt[q']{(q'-1)(2-q')}},\frac{1}{\sqrt[q']{q'(n-\beta)^2-n^2}}\} \|f\|_q\\
&+C\frac{\beta^{\frac{n(q-1)}{q}}}{\sqrt[q]{(n(q-1)-\beta q)}}\|f\|_1,
\end{split}
\end{equation}
where $0<\beta<\min\{1-\beta_0,\frac{(q'-\sqrt{q'})n}{q'}\}$,  $\frac 1q+\frac{1}{q'}=1$, $\frac{1}{q'}=\frac{1}{p'}(1-\frac{\beta}{n})$, and $(L^{q'}\cap L^{p'})^*$ is the dual space of $L^{q'}\cap L^{p'}$.
\end{theorem}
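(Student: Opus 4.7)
The plan is to follow the geometric approach of \cite{LW,Wang}, after first splitting the kernel at a $\beta$-dependent cut-off $R=1/\beta$. Write
\[
T_\varepsilon f = T_\varepsilon^{loc} f + T_\varepsilon^{\infty} f,
\]
where $T_\varepsilon^{loc}$ integrates over $\{\varepsilon\le|y|\le R\}$ and $T_\varepsilon^{\infty}$ over $\{|y|>R\}$. The far-field part will supply the $\|f\|_1$ contribution, while the near-field part will supply a $\beta$-independent $\|f\|_q$ bound.

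For the far field, Young's convolution inequality gives $\|T_\varepsilon^{\infty} f\|_q \le \|K\chi_{\{|y|>R\}}\|_q \|f\|_1$, and a direct polar computation (convergent at infinity precisely when $\beta<n(q-1)/q$) yields
\[
\|K\chi_{\{|y|>R\}}\|_q \le C\,\frac{R^{-[n(q-1)-\beta q]/q}}{(n(q-1)-\beta q)^{1/q}}.
\]
Inserting $R=1/\beta$ and noting that $\beta^{-\beta}$ is uniformly bounded on $\beta\in(0,1-\beta_0)$ reproduces exactly the factor $\beta^{(q-1)n/q}/(n(q-1)-\beta q)^{1/q}$ of \eqref{4-11-10}.

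For the near field one aims at $\|T_\varepsilon^{loc} f\|_q \le C\|f\|_q$ with $C$ independent of $\beta$, in three stages modelled on \cite{LW}. First, a uniform $L^2$ bound via Plancherel: from the cancellation of $\Omega$ the Fourier multiplier can be written as
\[
\widehat{K\chi_{\{\varepsilon\le|y|\le R\}}}(\xi)=\int_\varepsilon^R r^{\beta-1}\int_{\mathbb{S}^1}\Omega(\sigma)\big(e^{-ir\xi\cdot\sigma}-1\big)\,d\sigma\,dr,
\]
and one splits the radial integration at $r=1/|\xi|$, using the trivial estimate on the inner piece and the oscillatory decay of the spherical integral (controlled by the Dini modulus $\omega$) on the outer piece so as to absorb the factor $r^\beta$. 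Second, the $L^q$ bound for $q>2$ is produced by the good-$\lambda$/density argument of \cite{LW}: on any cube $Q$ one splits $f=f\chi_{2Q}+f\chi_{(2Q)^c}$, controls the first piece by the $L^2$ bound, and controls the oscillation of the second piece on $Q$ via the Dini condition on $\Omega$; a Calder\'on--Zygmund-type iteration on level sets then upgrades the density estimate to the $L^q$ norm bound. Third, the range $1<q<2$ is recovered by duality, since the adjoint of $T_\varepsilon^{loc}$ has the same structure with $\widetilde{\Omega}(y)=\Omega(-y)$, still in the class \eqref{4-9-15}.

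The main obstacle is the $\beta$-uniform $L^2$ estimate of the near-field operator. A careless bound on the Fourier multiplier displayed above produces a term of order $R^\beta/\beta$ from the tail of the $r$-integral, and this divergence (as $\beta\to 0$) is precisely the mechanism behind the blow-up of the Riesz constant $C(n,q,\beta)$ in \eqref{5-11-1}. Getting a constant depending only on $n,q,B_1,B_2,\beta_0$ therefore forces the exact matching $R=1/\beta$ together with a joint use of the cancellation of $\Omega$ and its Dini modulus $\omega$, so that the oscillatory decay of the spherical integral compensates the growth factor $r^\beta$ uniformly in $\beta$.
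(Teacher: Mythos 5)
Your plan reproduces, almost verbatim, the strategy the paper uses for its own Theorem~\ref{theorem2}: split the kernel with a smooth cut-off at scale $1/\beta$, bound the far-field piece by Young's inequality into the $\|f\|_1$ term (Lemma~\ref{uni-est1}), establish the $\beta$-uniform $L^2$ bound for the near-field piece by Plancherel plus cancellation and the Dini modulus (Lemma~\ref{uni-est21}/\eqref{K1-F}), and upgrade to $q>2$ via the good-$\lambda$/measure-theoretic iteration of \cite{LW,Wang} (Lemmas~\ref{MainLemma}--\ref{MainLemma5}), with $1<q<2$ by duality. Your identification of the $\beta$-uniform $L^2$ multiplier estimate as the crux, and of $R=1/\beta$ as the scale that balances the $r^\beta$ growth, is exactly right; the only technical gap in your sketch is that ``oscillatory decay of the spherical integral'' is not available for general Dini $\Omega$ — the paper gets the decay from the translation trick $e^{2\pi i y\cdot z}=-1$, $z=y/(2|y|^2)$, together with the Dini estimate on $|\Omega(x/|x|)-\Omega((x-z)/|x-z|)|$, cf.~\eqref{K1-F321}--\eqref{I-E}.

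The deeper mismatch is one of target. The statement you were asked about, Theorem~\ref{theorem3}, is quoted from \cite{YJ}, not proved in this paper, and its shape is a fingerprint of the \emph{classical} Calder\'on--Zygmund route (weak $(1,1)$ for the near field, Marcinkiewicz interpolation for $1<q<2$, then duality for $q>2$). That method produces exactly the artifacts your approach would never generate: the auxiliary $\|f\|_p$ term with $\tfrac1q=\tfrac1p(1-\tfrac\beta n)$ in case~(2), the explicit interpolation constants $\bigl((q-1)(2-q)\bigr)^{-1/q}$ that blow up as $q\to1^+$ or $q\to2^-$, the stronger restriction $\beta<(q-\sqrt q)n/q$, and, for $q>2$, a bound only in the dual space $(L^{q'}\cap L^{p'})^*$ rather than in $L^q$. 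Your geometric route bypasses all of this and lands directly on the cleaner estimate \eqref{4-11-10} — which is precisely the improvement the paper advertises and proves as Theorem~\ref{theorem2}. So as a blind proof of the literal inequalities \eqref{4-11-2}--\eqref{3276} your proposal does not reproduce them; as a proof of a strictly stronger replacement it coincides with the paper's own argument for its main theorem.
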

It follows from Theorem \ref{theorem3} that the strong $(q,q)$ type estimate of the Riesz transform can be recovered as $\beta\to 0$ and hence the singular integral $T_\varepsilon$ can be viewed as an extension of the Riesz transform if we take $\Omega(x)=\frac{x_j}{|x|}$ for $j=1,2,\cdots,n$ in \eqref{T-operator}, respectively. This kind of singular integral appears in  the approximation of the surface quasi-geostrophic equation (SQG) equation from the generalized SQG equation, see \cite{YJ, YZJ} for more details.  However,  the estimate \eqref{3275}  on $\|T_\varepsilon f\|_q$ is in terms of $\|f\|_q, \|f\|_p$ and $\|f\|_1$ and the estimate \eqref{3276} is on $\|T_\varepsilon f\|_{(L^{q'}\cap L^{p'})^*}$ but not on $\|T_\varepsilon f\|_q$, although the right hand side  is in terms of $\|f\|_q$ and $\|f\|_1$. As mentioned  in \cite{YJ} (see Remark 1.3 in \cite{YJ}), it would be interesting to make an estimate of $T_\varepsilon f$ in $L^q$ space for $2<q<\infty$ in a direct way. In this paper, we completely answer this question and  obtain the  estimate $\|T_\varepsilon f\|_q$ in terms of  $\|f\|_q$ and $\|f\|_1$ for $1<q<\infty$.  Moreover, the singular integral $T_\varepsilon$ considered in Theorem \ref{theorem2} is much more general than that in Theorem \ref{theorem3}.

 The approaches between \cite {YJ} and this paper are different. The main approach in \cite{YJ} is similar to the original one in \cite{CZ,stein} in which Theorem \ref{theorem1} was proved.  The main approach of this paper is similar to the one in \cite{LW, Wang} via the so called "geometric approach". As in \cite{YJ,YZJ}, we split the singular integral \eqref{T-operator} into two parts: the part near the origin denoted by $T_1f$ and the one apart from the origin denoted by $T_2f$. The estimate on $\|T_2f\|_q$  is easy to obtain (see Lemma \ref{uni-est1}) and the key part is to estimate $\|T_1f\|_q$ for $1<q<\infty$ (see Lemma \ref{theorem21}). We will first estimate $\|T_1f\|_2$ by modifying  the proof in \cite{YZJ,YJ} in Lemma \ref{uni-est21}. Then we will estimate $\|T_1f\|_q$ for $2<q<\infty$. To this end,  one of our main contributions  is to prove Lemma \ref{MainLemma}, which is a cornerstone to prove the main result. We should  remark that in Theorems \ref{theorem2}-\ref{theorem3} the restriction on $0<\beta\le 1-\beta_0$ for any small $0<\beta_0<\frac12$ (or $0<\beta<1$) is from the the estimate $\|T_1f\|_2$ in Lemma \ref{uni-est21}   (see \eqref{I-E} for more details) and $0<\beta<\frac{(q-1)n}{q}$ is from the estimate on $\|T_2f\|_q$ for $1<q<\infty$.

The paper is organized as follows. In Section 2, we will present some preliminary estimates which will be needed later. The proof of  Theorem \ref{theorem2}  will be given in Sections 3.

{\it Notations.} For $f\in L^q(\mathbb{R}^n)$ with  $1\le q\le \infty$, we denote its $L^q$ norm by $\|f\|_q$  and its support set $\bar{\{x\in \mathbb{R}^n: f(x)\neq 0\}}$ by supp $f$. For any cube $Q$ in $\mathbb{R}^n$ and $a>0$, we denote $aQ$ the cube with same center as $Q$ and the side-length $al$, where $l$ is the side-length of $Q$. Throughout this paper, all the cubes are open. For a measurable set $E$ in $\mathbb{R}^n$, $|E|$ means its Lebesgue measure. For a measurebale function $f$, its Hardy-Littlewood maximal function mathcal{M}f is defined as
$$
\displaystyle{\mathcal{M}f(x) =\sup_{x\in Q}}\frac{1}{|Q|}\int_Q |f(y)| dy
$$
for any cube $Q\subset \mathbb{R}^n$ containing $x$.

By $A\sim B$, we mean that $A$ is equivalent to $B$, that is, there exist two positive constants $c$ and $C$ such that $cA\le B\le CA$.

\section{Preliminaries}
\setcounter{section}{2}\setcounter{equation}{0}

In this section, we first state that the maximal function $\mathcal{M}f$  is  of strong type $(q,q)$ for $1<q<\infty$ and of weak type $(1,1)$, which is
\begin{lemma}\label{MF} It holds that
 \begin{equation}\label{MF1}
\begin{split}
&\|\mathcal{M}f\|_q\le C_q\|f\|_q,\\
&|\{x\in \mathbb{R}^n: (\mathcal{M}f)(x)>\lambda\}|\le C_1\frac{\|f\|_1}{\lambda}
\end{split}
\end{equation}
for any $\lambda>0$, where $C_1$ is a positive constant depending only on $n$ and $C_q$ is a positive constant depending only on $n$ and $q$ for each $1<q<\infty$.
\end{lemma}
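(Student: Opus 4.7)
The plan is to treat the weak-type $(1,1)$ bound first, then obtain the strong-type $(q,q)$ bounds for $1<q<\infty$ by real interpolation against the trivial $L^\infty$ estimate. This is the classical Hardy--Littlewood route and is essentially routine, so the proof sketch will be brief.

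For the weak-type $(1,1)$ inequality, I would fix $\lambda>0$ and set $E_\lambda=\{x\in\mathbb{R}^n:\mathcal{M}f(x)>\lambda\}$. For each $x\in E_\lambda$ there is, by definition of $\mathcal{M}f$, an open cube $Q_x\ni x$ such that
\begin{equation*}
|Q_x|<\frac{1}{\lambda}\int_{Q_x}|f(y)|\,dy.
\end{equation*}
In particular the diameters of the $Q_x$ are uniformly bounded (by $(\lambda^{-1}\|f\|_1)^{1/n}$), so a Vitali-type covering lemma produces a countable pairwise-disjoint subfamily $\{Q_{x_k}\}$ with the property that the dilated cubes $\{5Q_{x_k}\}$ still cover the set $\bigcup_x Q_x\supset E_\lambda$. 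Then
\begin{equation*}
|E_\lambda|\le\sum_k|5Q_{x_k}|=5^n\sum_k|Q_{x_k}|<\frac{5^n}{\lambda}\sum_k\int_{Q_{x_k}}|f|\,dy\le\frac{5^n}{\lambda}\|f\|_1,
\end{equation*}
which yields the second estimate in \eqref{MF1} with $C_1=5^n$.

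For the strong-type $(q,q)$ bound when $1<q<\infty$, the key observation is the trivial inequality $\|\mathcal{M}f\|_\infty\le\|f\|_\infty$, so that $\mathcal{M}$ is sublinear of weak type $(1,1)$ and strong type $(\infty,\infty)$. Then I would invoke the Marcinkiewicz interpolation theorem between these two endpoints to conclude that $\mathcal{M}$ is bounded on $L^q$ for every $1<q<\infty$, with operator norm depending only on $n$ and $q$. Concretely, for any $f\in L^q$ one splits $f=f\chi_{\{|f|>\lambda/2\}}+f\chi_{\{|f|\le\lambda/2\}}$ so that $\mathcal{M}f\le\mathcal{M}(f\chi_{\{|f|>\lambda/2\}})+\lambda/2$, applies the weak-$(1,1)$ bound to the first piece, and integrates the distribution function identity $\|\mathcal{M}f\|_q^q=q\int_0^\infty\lambda^{q-1}|\{\mathcal{M}f>\lambda\}|\,d\lambda$ against the pointwise estimate to obtain $\|\mathcal{M}f\|_q\le C_q\|f\|_q$ with $C_q$ depending only on $n$ and $q$.

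The one step that deserves care is the covering argument underpinning the weak-$(1,1)$ bound, since the cubes $Q_x$ are produced by the supremum defining $\mathcal{M}$ and hence come without a priori structure; the uniform bound on their diameters in terms of $\lambda^{-1}\|f\|_1$ is what makes the Vitali selection procedure legitimate. Every other step is soft: the $L^\infty$ bound is immediate from $\frac{1}{|Q|}\int_Q|f|\le\|f\|_\infty$, and Marcinkiewicz interpolation is applied off the shelf. I therefore do not anticipate any substantive obstacle beyond setting up the covering carefully.
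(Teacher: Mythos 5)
The paper does not actually prove Lemma~\ref{MF}; it states the Hardy--Littlewood maximal theorem as a classical preliminary fact (it is the standard result from, e.g., Stein's book cited in the bibliography) and uses it without argument. Your proposal supplies the textbook proof and is correct: Vitali covering to obtain weak $(1,1)$ with $C_1=5^n$, then Marcinkiewicz interpolation against the trivial $L^\infty\to L^\infty$ bound to get strong $(q,q)$ for $1<q<\infty$. Two small points worth tightening if you were to write this out in full: (i) one should note that $E_\lambda$ is open (the maximal function is lower semicontinuous), so that Lebesgue measure rather than outer measure applies without comment; and (ii) in the Marcinkiewicz step as you sketched it with the explicit splitting $f=f\chi_{\{|f|>\lambda/2\}}+f\chi_{\{|f|\le\lambda/2\}}$, the resulting constant $C_q$ blows up like $(q-1)^{-1}$ as $q\to 1^+$, which is consistent with the lemma's claim that $C_q$ depends on $q$ but is worth keeping in mind since the paper later tracks the $q$-dependence of constants in its own estimates.
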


Then we split the singular integral \eqref{T-operator} into two parts: the one near the origin and the one apart from the origin. Let  $\chi(s)\in C_0^\infty(R)$ be the usual smooth cutting-off function which is defined as
$$
\chi(s)=
\left\{
\begin{array}{ll}
1, & |s|\le 1,\\[3mm]
0, & |s|\ge 2,
\end{array}
\right.
$$
satisfying $|\chi'(s)|\le 2$. Let
\begin{equation}\label{Chi-L}
\chi_\lambda(s)=\chi(\lambda s),
\end{equation}
and define
\begin{eqnarray}
&& T_1f(x)=\int_{|y|\ge\varepsilon}\frac{\Omega(y)}{|y|^{n-\beta}}\chi_\beta(|y|)f(x-y) dy \label{T11}\\
&& T_2f(x)=\int_{|y|\ge\varepsilon}\frac{\Omega(y)}{|y|^{n-\beta}}(1-\chi_\beta(|y|))f(x-y) \label{T22} dy.
\end{eqnarray}
 It is clear that the operator $T$ in \eqref{T-operator} can be written as
\begin{equation}\label{T-j}
T=T_1+T_2.
\end{equation}

The following is a $L^q$-estimate of $T_2$:
\begin{lemma}\label{uni-est1}
 There exists an absolute constant $C$ depending on $B_1$ and independent of $\beta$ such that for any $1<q<\infty$,
\begin{equation}\label{T2-11}
\|T_2f\|_q\leq C\frac{\beta^{\frac{(q-1)n}{q}}}{\sqrt[q]{(n(q-1)-\beta q)}}\|f\|_1,  0<\beta<\frac{(q-1)n}{q}.
\end{equation}
\end{lemma}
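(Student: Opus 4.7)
The idea is to reduce to Young's convolution inequality, since the cutoff $1-\chi_\beta(|y|)$ removes the singularity at the origin and leaves only a ``tail'' contribution that is an honest $L^q$ kernel when $\beta<n(q-1)/q$. From the definition \eqref{Chi-L} of $\chi_\beta$, we have $1-\chi_\beta(|y|)=0$ for $|y|\le 1/\beta$ and $0\le 1-\chi_\beta(|y|)\le 1$ everywhere. Combined with $|\Omega(y)|\le B_1$ (and dropping the lower restriction $|y|\ge\varepsilon$, which can only shrink the domain of integration), I would derive the pointwise bound
$$|T_2 f(x)|\le B_1\int_{|y|\ge 1/\beta}\frac{|f(x-y)|}{|y|^{n-\beta}}\,dy=(g*|f|)(x),\qquad g(y):=B_1|y|^{\beta-n}\mathbf{1}_{\{|y|\ge 1/\beta\}}.$$

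Next I would apply Young's convolution inequality $\|g*|f|\|_q\le \|g\|_q\|f\|_1$ and compute $\|g\|_q$ explicitly in polar coordinates:
$$\|g\|_q^q=B_1^q\,\omega_{n-1}\int_{1/\beta}^{\infty}r^{(\beta-n)q+n-1}\,dr.$$
The integral converges exactly when $(n-\beta)q>n$, i.e.\ $\beta<n(q-1)/q$, which is the hypothesis of the lemma, and its value is $\bigl(n(q-1)-\beta q\bigr)^{-1}\beta^{(n-\beta)q-n}$. Taking the $q$-th root,
$$\|g\|_q=C_n^{1/q}B_1\,\frac{\beta^{\frac{n(q-1)}{q}-\beta}}{\sqrt[q]{n(q-1)-\beta q}}.$$

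Finally, I would absorb the extra factor $\beta^{-\beta}$ into the constant: the function $\beta\mapsto\beta^{-\beta}=e^{-\beta\log\beta}$ is uniformly bounded on $(0,\infty)$ (attaining its maximum $e^{1/e}$ at $\beta=1/e$), so it contributes only a harmless dimensional factor. This yields \eqref{T2-11} with a constant depending only on $n$ and $B_1$, uniformly in $\beta$. The argument is essentially one line of Young's inequality plus an elementary tail integral, so there is no serious obstacle — the only point worth recording is that it is the cutoff built into $T_2$ (rather than decay of $\Omega$) that makes the kernel lie in $L^q$, and that the exponent of $\beta$ in \eqref{T2-11} comes precisely from evaluating the power-law integral at its lower endpoint $1/\beta$.
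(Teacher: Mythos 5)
Your argument is the same as the paper's: both reduce to the $L^1$-vs-$L^q$ form of Young's (equivalently Minkowski's integral) inequality $\|K*f\|_q\le\|K\|_q\|f\|_1$ and then evaluate $\|K\|_q$ for the truncated power kernel $|y|^{\beta-n}\mathbf{1}_{\{|y|\ge 1/\beta\}}$ by a direct tail integral, with convergence exactly for $\beta<n(q-1)/q$. You make one point explicit that the paper glosses over: the tail integral produces $\beta^{\frac{n(q-1)}{q}-\beta}$ rather than $\beta^{\frac{n(q-1)}{q}}$, and the extra factor $\beta^{-\beta}$ must be absorbed into $C$ using the uniform bound $\beta^{-\beta}\le e^{1/e}$; this is correct and a worthwhile clarification.
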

\begin{proof}
Thanks to \eqref{4-9-15}, direct estimates give
\begin{equation*}\begin{split}
\|T_2f\|_q&\leq\|\int_{|x-y|\geq\frac{1}{\beta}}\frac{1}{|x-y|^{n-\beta}}|f(y)|\,dy\|_q
\\&\leq\|f\|_1(\int_{|x-y|\geq\frac{1}{\beta}}\frac{1}{|x-y|^{q(n-\beta)}}\,dy)^{\frac1q}
\\&\leq C\frac{\beta^{\frac{(q-1)n}{q}}}{\sqrt[q]{(n(q-1)-\beta q)}}\|f\|_1
\end{split}
\end{equation*}
for any $0<\beta<\frac{(q-1)n}{q}$.
\end{proof}

In view of Lemma \ref{uni-est1}, to prove Theorem \ref{theorem2}, we only need to prove
\begin{theorem}\label{theorem21}
Let $0<\beta_0<\frac 12$ be any fixed and small number. Then for any $f\in L^1(\mathbb{R}^n)\cap L^q(\mathbb{R}^n)$ with $1< q <\infty$, then there exists an absolute constant $C$ depending on $n, q, B_1, B_2$ and $\beta_0$ such that
\begin{equation}\label{4-11-1}
\begin{split}
\|T_1f\|_q\le C\|f\|_q
\end{split}
\end{equation}
holds for $0<\beta\le 1-\beta_0$.
\end{theorem}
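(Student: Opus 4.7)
The plan is to follow the geometric approach of Li--Wang and Wang, splitting the argument into three ranges: the base case $q=2$, the case $2<q<\infty$ by a good-$\lambda$ iteration built on the $L^2$ bound, and the case $1<q<2$ by duality. For the base case I would invoke Lemma \ref{uni-est21}, which yields $\|T_1 f\|_2\le C\|f\|_2$ with a constant $C$ depending on $n,B_1,B_2,\beta_0$ but independent of $\beta$ in the regime $0<\beta\le 1-\beta_0$. This serves as the seed for the higher integrability.

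For $2<q<\infty$ I would run a Calder\'on--Zygmund/Vitali-type decomposition on level sets of an appropriate maximal function, which is encapsulated in Lemma \ref{MainLemma}. The version I have in mind reads roughly: there exist absolute constants $N>1$ and $\eta\in(0,1)$, uniform in $\beta$, such that for every $\lambda>0$ and every dyadic cube $Q$,
\[
|Q\cap\{x:\mathcal{M}(|T_1 f|^2)(x)>N^2\lambda^2\}| > \eta|Q|
\]
forces $Q\subset\{x:\mathcal{M}(|f|^2)(x)>\lambda^2\}$. I would establish this by contrapositive: assuming good points $x_0,x_1\in Q$ at which both maximal functions are controlled, decompose $f=f_1+f_2$ with $f_1=f\chi_{4Q}$, bound $T_1 f_1$ via the $L^2$ estimate together with Chebyshev's inequality, and control $T_1 f_2(x)-T_1 f_2(x_0)$ pointwise for $x\in Q$ by a smoothness estimate on the kernel $K(y)=\Omega(y)|y|^{\beta-n}\chi_\beta(|y|)$, exploiting both the Dini modulus of $\Omega$ and the scaling of $\chi_\beta$. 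Iterating this inequality gives a geometric decay $|\{\mathcal{M}(|T_1 f|^2)^{1/2}>N^k\lambda_0\}| \le C\eta^k$ for a suitable seed $\lambda_0$, and then integrating the layer cake together with the strong $(q/2,q/2)$ bound in Lemma \ref{MF} delivers $\|T_1 f\|_q\le C\|f\|_q$ for $q>2$.

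For $1<q<2$ I would close the argument by duality. The formal adjoint $T_1^{\ast}$ has kernel $\Omega(-y)|y|^{\beta-n}\chi_\beta(|y|)$, and $\widetilde\Omega(y):=\Omega(-y)$ satisfies the hypotheses \eqref{4-9-15} with the same constants $B_1,B_2$. The previous step applied to $T_1^{\ast}$ in $L^{q'}$ with $q'>2$ then yields $\|T_1 f\|_q\le C\|f\|_q$ by standard duality, with a constant again independent of $\beta$.

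The main obstacle, and the reason Lemma \ref{MainLemma} is the cornerstone, is the need to prove the kernel smoothness/tail estimate uniformly in $\beta$ as $\beta\to 0$. Two $\beta$-dependent ingredients must be reconciled simultaneously: the Dini continuity of $\Omega$, and the weights $|y|^{-\beta}$ and cutoff $\chi_\beta$ whose derivative has size $O(\beta)$ but whose support has radius $O(1/\beta)$. Tracking the compensating scalings between these factors, so that no logarithmic or negative power of $\beta$ leaks into the constants, is precisely what is required for $C$ to remain bounded as $\beta\to 0$ and hence for the classical Calder\'on--Zygmund estimate of Theorem \ref{theorem1} to be recovered from \eqref{4-11-10} in the limit.
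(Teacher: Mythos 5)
Your proposal matches the paper's proof essentially step for step: Lemma \ref{uni-est21} gives the $L^2$ base case, the good-$\lambda$/Caffarelli--Peral--Wang iteration built on the kernel-smoothness Lemma \ref{MainLemma} handles $2<q<\infty$, and duality closes $1<q<2$, all with constants uniform in $\beta\le 1-\beta_0$, as in the paper. The only small caveat is that the density-dichotomy lemma as you roughly state it is a bit too strong, since the correct conclusion should be that the (dyadic parent) cube lies in $\{\mathcal{M}(|T_1f|^2)>\lambda^2\}\cup\{\mathcal{M}(f^2)>\delta^2\lambda^2\}$ rather than merely in $\{\mathcal{M}(f^2)>\lambda^2\}$, but the proof you sketch --- splitting $f$ into its restrictions to $4Q$ and its complement, $L^2$ plus Chebyshev on the local piece, and pointwise kernel regularity via the Dini modulus and the scaling of $\chi_\beta$ on the far piece --- is exactly what yields the correct version, which the paper carries out across Lemmas \ref{MainLemma}--\ref{MainLemma5}.
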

We first prove that the operator $T_1$ is of strong type $(2,2)$, which has been shown in \cite{YZJ,YJ} for the case $\Omega(x)=\frac{x_j}{|x|}$ for $j=1,2,\cdots,n$
. Here we modify the proof in \cite{YZJ,YJ} to obtain
\begin{lemma}\label{uni-est21}
 Let $0<\beta_0<\frac 12$ be any fixed and small number. Then there exists an absolute constant $C$ depending on $n$ and $\beta_0$ such that
\begin{equation}\label{T1-1}
\|T_1f\|_2\leq C\|f\|_2
\end{equation}
holds for $0<\beta\le 1-\beta_0$.
\end{lemma}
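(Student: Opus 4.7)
The approach is to use Plancherel's theorem, which reduces \eqref{T1-1} to the uniform pointwise bound $\|\widehat{K_1}\|_{L^\infty(\mathbb{R}^n)}\le C$, where
$K_1(y) = \frac{\Omega(y)}{|y|^{n-\beta}}\chi_\beta(|y|)\,\mathbf{1}_{|y|\ge\varepsilon}(y)$
is the convolution kernel of $T_1$ and $C$ is to be made independent of $\varepsilon>0$ and of $\beta\in(0,1-\beta_0]$. This is the classical Calder\'on--Zygmund route via the Fourier transform, adapted to the $\beta$-dependent truncation. Because $\Omega$ has vanishing spherical mean and is homogeneous of degree zero, the integral of $K_1$ over any spherical shell vanishes identically, so the $\varepsilon$-truncation introduces no loss and can be handled by a limiting argument.

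Writing $y = r\sigma$ in polar coordinates and using the homogeneity of $\Omega$ produces
$\widehat{K_1}(\xi) = \int_\varepsilon^{\infty} r^{\beta-1}\chi_\beta(r)\,F_e(r|\xi|)\,dr$,
where $F_e(t) = \int_{\mathbb{S}^{n-1}}\Omega(\sigma)\,e^{-2\pi i t\,\sigma\cdot e}\,d\sigma$ with $e = \xi/|\xi|$. Two properties of the spherical oscillatory integral $F_e$ drive the argument. First, the cancellation $\int_{\mathbb{S}^{n-1}}\Omega = 0$ yields $F_e(0) = 0$, hence a first-order Taylor bound on the exponential gives $|F_e(t)|\le CB_1\min(t,1)$. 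Second, the Dini modulus \eqref{4-9-16} combined with the standard antipodal rotation trick on $\mathbb{S}^{n-1}$ yields the decay estimate $|F_e(t)|\le C(B_1+B_2)\,\omega(1/t)$ for $t\ge 1$, which is integrable against the Dini kernel $d\delta/\delta$.

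After substituting $s = r|\xi|$ the problem reduces to bounding
$|\xi|^{-\beta}\int_0^{2|\xi|/\beta}s^{\beta-1}|F_e(s)|\,ds$
uniformly in $\xi$ and $\beta$. We split the $s$-integral at $s=1$. On $[0,\min(1,2|\xi|/\beta)]$, the Taylor bound on $F_e$ gives an integrand proportional to $s^{\beta}$, and tracking the prefactor $|\xi|^{-\beta}$ against the cutoff length yields a contribution controlled by a constant times $(2/\beta)^\beta/(\beta+1)$. On $[1,2|\xi|/\beta]$, the Dini decay bound together with the change of variable $u=1/s$ transforms the inner integral into $\int_{\beta/(2|\xi|)}^1 u^{-\beta-1}\omega(u)\,du$; pulling out $u^{-\beta}\le(2|\xi|/\beta)^\beta$ and invoking $\int_0^1\omega(\delta)/\delta\,d\delta = B_2$ bounds this piece by $B_2(2|\xi|/\beta)^\beta$, so that after multiplying by $|\xi|^{-\beta}$ one again gets $B_2(2/\beta)^\beta$. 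The factor $(2/\beta)^\beta$ stays bounded for $\beta\in(0,1-\beta_0]$, yielding the required uniform estimate.

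The main obstacle is the tight interplay between the $\beta$-dependent ingredients: the radial weight $r^{\beta-1}$ is more singular as $\beta\to 0$, while the cutoff $\chi_\beta$ has support of size $1/\beta$ which escapes to infinity as $\beta\to 0$. The proof must show that these two effects cancel precisely. The restriction $\beta\le 1-\beta_0$ enters at this stage to keep certain numerical constants involving $\beta$ bounded away from a divergence; this is the qualitative substitute, valid for a general $\Omega$ satisfying only \eqref{4-9-15}, for the explicit Bessel-type computation available in \cite{YJ,YZJ} for the Riesz kernel $\Omega(x) = x_j/|x|$.
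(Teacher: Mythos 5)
Your high-level plan is the same as the paper's: pass to Fourier side via Plancherel and show that $\widehat{K_1}$ is bounded uniformly in $\beta$ and $\varepsilon$. The difference is in how you handle the oscillatory integral, and this is where your argument breaks.

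The step that fails is the claimed decay estimate $|F_e(t)|\le C(B_1+B_2)\,\omega(1/t)$ for the spherical oscillatory integral $F_e(t)=\int_{\mathbb{S}^{n-1}}\Omega(\sigma)e^{-2\pi it\,\sigma\cdot e}\,d\sigma$. An ``antipodal rotation'' on $\mathbb{S}^{n-1}$ cannot uniformly flip the sign of $e^{-2\pi it\,\sigma\cdot e}$: if one rotates $\sigma$ by an angle $\phi$ in the plane of $\sigma$ and $e$, the phase shift produced is $\approx t\,\sin\theta\cdot\phi$ where $\theta$ is the polar angle of $\sigma$ relative to $e$, so making the shift equal to $1/2$ forces $\phi\approx\dfrac{1}{2t\sin\theta}$, which blows up precisely at the poles $\sigma=\pm e$. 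Those poles are the stationary points of the phase, and they cannot be treated by any sphere-shift trick. The correct sharp decay for $\Omega$ smooth (a fortiori Dini) with $\Omega(\pm e)\ne 0$ is $|F_e(t)|\sim t^{-(n-1)/2}$ by stationary phase; for $n=2$ this is $t^{-1/2}$, which is strictly slower than $\omega(1/t)=O(1/t)$ for Lipschitz $\Omega$, so the stated bound is false in the most basic case.

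This is not just a cosmetic issue: with the true decay $|F_e(s)|\lesssim s^{-(n-1)/2}$, your second piece
\begin{equation*}
|\xi|^{-\beta}\int_1^{2|\xi|/\beta}s^{\beta-1}|F_e(s)|\,ds
\end{equation*}
is no longer uniformly bounded on the range $\beta\in(0,1-\beta_0]$ demanded by the lemma. For $n=2$ the inner integral behaves like $\int_1^{T}s^{\beta-3/2}\,ds$ with $T=2|\xi|/\beta$, which diverges as $T\to\infty$ exactly when $\beta>1/2$, so the constant produced is of size $\frac{1}{|\beta-1/2|}$ and blows up for $\beta$ near $1/2$. The lemma explicitly allows $\beta$ throughout $(0,1-\beta_0]$ with $\beta_0<\frac12$, so this range is not covered.

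The paper sidesteps this by never isolating the spherical integral. Its phase flip is the translation $z=\dfrac{y}{2|y|^2}$ in $\mathbb{R}^n$ (not a rotation of $\mathbb{S}^{n-1}$), which does give an exact flip $e^{2\pi iy\cdot z}=-1$ for all $x$. The resulting difference $K_1(x)-K_1(x-z)$ is then split into a Dini-controlled piece (the term $I_{11}$), an elementary kernel-gradient piece (the term $I_{22}$, where the factor $\frac{1}{1-\beta}$ and hence the restriction $\beta\le 1-\beta_0$ originates), and boundary terms ($I_2,I_3,I_4,J,K,L$) coming from the cutoff and the annulus edges. This is the classical Calder\'on--Zygmund $L^2$ argument in Stein's book, and it avoids stationary phase on the sphere entirely. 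To repair your argument you would have to abandon the pointwise decay of $F_e$ and instead run the radial integral first (getting the logarithmic singularity in $\sigma\cdot e$ that the Dini condition is designed to control), or switch to the paper's $\mathbb{R}^n$-shift; as written, the step ``$|F_e(t)|\le C\omega(1/t)$'' is unsound and the estimate does not close.
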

\begin{proof}

In view of \eqref{T11}, we denote
\begin{equation*}\label{}
K_1(x)=\frac{\Omega(x)}{|x|^{n-\beta}}\chi_\beta(|x|).
\end{equation*}
Then to prove \eqref{T1-1}, our main target is to  prove that there exists an absolute constant $C>0$ independent $\beta$ such that
\begin{equation}\label{K1-F}
\|\widehat{K_1}(y)\|_{L^\infty}\leq C, \ \ 0<\beta<1.
\end{equation}
 By cancellation of $\Omega(x)$ (see \eqref{4-9-15}), one has $\int_{\mathbb{S}^1} K_1(x) ds=0$.  Since $K_1(x)$ is supported on $|x|\le \frac 2\beta$, we have
\begin{equation*}
\widehat{K_1}(y)=\int_{\R^n} e^{2\pi ix\cdot y}K_1(x)\,dx=\int_{|x|\leq\frac{2}{\beta}} (e^{2\pi ix\cdot y}-1)K_1(x)\,dx
\end{equation*}
Since \eqref{K1-F} is a pointwise estimate, we will estimate $\widehat{K_1}(y)$ by different values of $y$.  If $|y|<\frac{\beta}{2}$,  it is direct to estimate
\begin{equation}\label{s-1}
\begin{split}
|\widehat{K_1}(y)|&\leq C|y|\int_{|x|\leq \frac 2\beta}|x|\frac{1}{|x|^{n-\beta}}\,dx
\\ &\leq C\frac{2^\beta}{\beta+1}\beta^{-\beta}.
\end{split}
\end{equation}

Then there exists an absolute constant $C>0$ such that
\begin{equation}\label{K1-F1}
|\widehat{K_1}(y)|\leq C,  0<\beta<n, \ |y|< \frac{\beta}{2}.
\end{equation}

If $\frac{\beta}{2}\leq |y|\leq\beta,$ we rewrite $\widehat{K_1}(y)$ as
\begin{equation*}\begin{split}
\widehat{K_1}(y)&=\int_{|x|<\frac{1}{|y|}} e^{2\pi ix\cdot y}K_1(x)\,dx+\int_{\frac{1}{|y|}\leq|x|\leq\frac{2}{\beta}} e^{2\pi ix\cdot y}K_1(x)\,dx
\\&=\int_{|x|<\frac{1}{|y|}} (e^{2\pi ix\cdot y}-1)K_1(x)\,dx+\int_{\frac{1}{|y|}\leq|x|\leq\frac{2}{\beta}} e^{2\pi ix\cdot y}K_1(x)\,dx
\end{split}
\end{equation*}
Similar to \eqref{s-1}, it deduces
\begin{equation*}\begin{split}
|\int_{|x|<\frac{1}{|y|}} (e^{2\pi ix\cdot y}-1)K_1(x)\,dx|\leq \frac{2^\beta}{\beta+1}\beta^{-\beta}.
\end{split}
\end{equation*}
Moreover, we have
\begin{equation*}\begin{split}
|\int_{\frac{1}{|y|}\leq|x|\leq\frac{2}{\beta}} e^{2\pi ix\cdot y}K_1(x)\,dx|\leq C\frac{2^\beta-1}{\beta}\beta^{-\beta}.
\end{split}
\end{equation*}

Consequently, there exists an absolute constant $C>0$ such that
\begin{equation}\label{K1-F2}
|\widehat{K_1}(y)|\leq C(\frac{2^\beta}{\beta+1}\beta^{-\beta}+\frac{2^\beta-1}{\beta}\beta^{-\beta})\le C, \ 0<\beta<n, \frac{\beta}{2}\le |y|\le\beta.
\end{equation}

As for $ |y|>\beta,$ $\widehat{K_1}(y)$ can be divided into
\begin{equation}\label{K1-F31}
\begin{split}
\widehat{K_1}(y)&=\int_{|x|<\frac{2}{|y|}} e^{2\pi ix\cdot y}K_1(x)\,\mathrm{d}x+\int_{\frac{2}{|y|}\leq|x|\leq\frac{2}{\beta}} e^{2\pi ix\cdot y}K_1(x)\,\mathrm{d}x
\\&=\int_{|x|<\frac{2}{|y|}} \big(e^{2\pi ix\cdot y}-1\big)K_1(x)\,\mathrm{d}x+\int_{\frac{2}{|y|}\leq|x|\leq\frac{2}{\beta}} e^{2\pi ix\cdot y}K_1(x)\,\mathrm{d}x.
\end{split}
\end{equation}
For the first term on the right hand of the above equality, we easily find that
\begin{equation}\label{K1-F311}
\begin{split}
\Big|\int_{|x|<\frac{2}{|y|}} \big(e^{2\pi ix\cdot y}-1\big)K_1(x)\,\mathrm{d}x\Big|&\leq C|y|\int_{|x|<\frac{2}{|y|}}|x|\frac{1}{|x|^{n-\beta}}\,\mathrm{d}x
\\&\leq \frac{C}{(\beta+1)|y|^\beta}\leq \frac{C}{\beta+1}\beta^{-\beta}.
\end{split}
\end{equation}
For the second term,
we  choose $z=\frac{y}{2|y|^2}$ with $|z|=\frac{1}{2|y|}<\frac{1}{2\beta}$ such that $e^{2\pi iy\cdot z}=-1$
and
\begin{equation*}
\int_{\R^n} e^{2\pi ix\cdot y}K_1(x)\,\mathrm{d}x=\frac12\int_{\R^n} e^{2\pi ix\cdot y}\big(K_1(x)-K_1(x-z)\big)\,\mathrm{d}x,
\end{equation*}
and moreover, we have
\begin{equation}\label{K1-F321}
\begin{split}
\int_{\frac{2}{|y|}\leq|x|\leq\frac{2}{\beta}} e^{2\pi ix\cdot y}K_1(x)\,\mathrm{d}x=&\frac12\int_{\frac{2}{|y|}\leq|x|\leq\frac{2}{\beta}} e^{2\pi ix\cdot y}\big(K_1(x)-K_1(x-z)\big)\,\mathrm{d}x
\\&-\frac12\int_{\frac{2}{|y|}\leq|x+z|,~ |x|\leq\frac{2}{|y|}} e^{2\pi ix\cdot y}K_1(x)\,\mathrm{d}x
\\&+\frac12\int_{|x+z|\leq\frac{1}{|y|},~ |x|\geq\frac{2}{|y|}} e^{2\pi ix\cdot y}K_1(x)\,\mathrm{d}x\\
&+\frac12\int_{|x+z|\ge\frac{2}{\beta}} e^{2\pi ix\cdot y}K_1(x) \,\mathrm{d}x
\\:= &I+J+K+L.
\end{split}
\end{equation}
To estimate the term $I$, we see that
\begin{equation}\label{F321-I}
\begin{split}
2I&=\int_{\frac{2}{|y|}\leq|x|<\frac{1}{\beta},~|x-z|\leq\frac{1}{\beta}}\Big(\frac{\Omega(x)}{|x|^{n-\beta}}-\frac{\Omega(x-z)}{|x-z|^{n-\beta}}\Big)e^{2\pi ix\cdot y}\,\mathrm{d}x
\\&+\int_{\frac{1}{\beta}\leq|x|\leq\frac{2}{\beta},~|x-z|\leq\frac{1}{\beta}}\Big(\frac{\Omega(x)}{|x|^{n-\beta}}\chi_\beta(x)-\frac{\Omega(x-z)}{|x-z|^{n-\beta}}\Big)e^{2\pi ix\cdot y}\,\mathrm{d}x
\\&+\int_{\frac{2}{|y|}\leq|x|<\frac{1}{\beta},~|x-z|\geq\frac{1}{\beta}}\Big(\frac{\Omega(x)}{|x|^{n-\beta}}-\frac{\Omega(x-z)}{|x-z|^{n-\beta}}\chi_\beta(x-z)\Big)e^{2\pi ix\cdot y}\,\mathrm{d}x
\\&+\int_{\frac{1}{\beta}\leq|x|\leq\frac{2}{\beta},~|x-z|\geq\frac{1}{\beta}}\Big(\frac{\Omega(x)}{|x|^{n-\beta}}\chi_\beta(x)-\frac{\Omega(x-z)}{|x-z|^{n-\beta}}\chi_\beta(x-z)\Big)e^{2\pi ix\cdot y}\,\mathrm{d}x
\\&:=I_1+I_2+I_3+I_4.
\end{split}
\end{equation}
We first estimate $I_2.$ Thanks to $|x-z|\geq|x|-|z|\geq\frac{1}{\beta}-\frac{1}{2|y|}\geq\frac{1}{2\beta},$ one has
\begin{equation}\label{I-2}
\begin{split}
|I_2|&\leq\int_{\frac{1}{\beta}\leq|x|\leq\frac{2}{\beta}}\frac{1}{|x|^{n-\beta}}\,\mathrm{d}x
+\int_{\frac{1}{2\beta}\leq|x-z|\leq\frac{1}{\beta}}\frac{1}{|x-z|^{n-\beta}}\,\mathrm{d}x
\\&\leq C\frac{2^\beta-1}{\beta}\beta^{-\beta}+C\frac{1-2^{-\beta}}{\beta}\beta^{-\beta}.
\end{split}
\end{equation}
Then, thanks to  $|x|=|x-z+z|\geq|x-z|-|z|\geq\frac{1}{\beta}-\frac{1}{2|y|}\geq\frac{1}{2\beta},$ $I_3$ can be estimated as follows:
\begin{equation}\label{I-3}
\begin{split}
|I_3|&\leq\int_{\frac{1}{2\beta}\leq|x|\leq\frac{1}{\beta}}\frac{1}{|x|^{n-\beta}}\,\mathrm{d}x
+\int_{\frac{1}{\beta}\leq|x-z|\leq\frac{2}{\beta}}\frac{1}{|x-z|^{n-\beta}}\,\mathrm{d}x
\\&\leq C\frac{1 -2^{-\beta}}{\beta}\beta^{-\beta}+C\frac{2^\beta-1}{\beta}\beta^{-\beta}.
\end{split}
\end{equation}
The term $I_4$ is directly estimated as
\begin{equation}\label{I-4}
\begin{split}
|I_4|&\leq\int_{\frac{1}{\beta}\leq|x|\leq\frac{2}{\beta}}\frac{1}{|x|^{n-\beta}}\,\mathrm{d}x
+\int_{\frac{1}{\beta}\leq|x-z|\leq\frac{2}{\beta}}\frac{1}{|x-z|^{n-\beta}}\,\mathrm{d}x
\\&\leq C\frac{2^\beta-1}{\beta}\beta^{-\beta}.
\end{split}
\end{equation}
Now we deal with $I_1.$  $I_1$ can be decomposed into two terms.
\begin{equation*}
\begin{split}
I_1&=\int_{4|z|=\frac{2}{|y|}\leq|x|<\frac{1}{\beta},~|x-z|\leq\frac{1}{\beta}}\Big(\frac{1}{|x|^{n-\beta}}\big(\Omega(x)-\Omega(x-z)\big) \Big)e^{2\pi ix\cdot y}\,\mathrm{d}x\\+&
\int_{4|z|=\frac{2}{|y|}\leq|x|<\frac{1}{\beta},~|x-z|\leq\frac{1}{\beta}}\Big(\Omega(x-z)\big(\frac{1}{|x|^{n-\beta}}-\frac{1}{|x-z|^{n-\beta}}\big) \Big)e^{2\pi ix\cdot y}\,\mathrm{d}x\\=&I_{11}+I_{22}.
\end{split}
\end{equation*}
Since $\Omega$ is  homogeneous of degree 0 by \eqref{4-9-15},
\begin{equation}
I_{11}=\int_{4|z|\leq|x|<\frac{1}{\beta},~|x-z|\leq\frac{1}{\beta}}\Big(\frac{1}{|x|^{n-\beta}}\big(\Omega(\frac{x}{|x|})-\Omega(\frac{x-z}{|x-z|})\big) \Big)e^{2\pi ix\cdot y}\,\mathrm{d}x.
\end{equation}
To estimate $|\frac{x}{|x|}-\frac{x-z}{|x-z|}|$, let
$$
f(x)=\frac{x}{|x|},\qquad f_i(x)=\frac{x_i}{|x|},\quad i=1,2,\ldots,n.
$$
Then
\begin{equation*}
\partial_jf_i(x)=\frac{\mathbf{\delta}_{ij}}{|x|}
-\frac{x_jx_i}{|x|^{3}},\quad i,j=1,2,\ldots,n,
\end{equation*}
where $\delta_{ij}=1$ if $i=j$ and  $\delta_{ij}=0$ if $i\neq j$.
In this case, since $|x-z|\geq|x|-|z|\geq4|z|-|z|\geq3|z|,$ it concludes that for any $0\le t\le 1$, we have $|x-tz|\ge |x-z|-(1-t)|z|\ge \frac12|x-z|$. By the mean value theorem, one has
\begin{equation*}\begin{split}
 |f_i(x-z)-f_i(x)|&=\bigg|\int_0^1 \nabla f_i(x-tz)\cdot z\, \mathrm{d}t\bigg|\\
 &\le C\bigg|\int_0^1 \frac{|z|}{|x-tz|}\, \mathrm{d}t\bigg|\\
 &\le C\frac{|z|}{|x-z|}
\end{split}
\end{equation*}
for $i=1,2,\ldots,n,$ and
\begin{equation*}\begin{split}
 |f(x-z)-f(x)|&\le \sum_{j=1}^n |f_j(x-z)-f_j(x)|\le  C\frac{|z|}{|x-z|}\leq C\frac{|z|}{|x|}
\end{split}
\end{equation*}
for some absolute constant $C$ depending on $n$.
Therefore, it follows from the definition of $\omega$ in \eqref{4-9-16} that
\begin{equation*}\begin{split}
 |\Omega(\frac{x-z}{|x-z|})-\Omega(\frac{x}{|x|})|&\le \omega(C\frac{|z|}{|x|}).
\end{split}
\end{equation*}
Consequently,
\begin{equation}\label{I-1}
\begin{split}
I_{11}\leq &C\int_{4|z|\leq|x|<\frac{1}{\beta}} \frac{1}{|x|^{n-\beta}}\omega(\frac{C|z|}{|x|})\,\mathrm{d}x\\
\\ \leq & C\int_{4|z|}^{\frac{1}{\beta}}r^{\beta-1}\omega(\frac{C|z|}{r})\,\mathrm{d}r\\ = &
C|z|^{\beta}\int_{C|z|\beta}^{\frac{C}{4}}\frac{\omega(\delta)}{\delta}\delta^{-\beta}\,\mathrm{d}\delta\\= &
C|z|^{\beta}(\int_{C|z|\beta}^{1}\frac{\omega(\delta)}{\delta}\delta^{-\beta}\,\mathrm{d}\delta
+\int_{1}^{\frac{C}{4}}\frac{\omega(\delta)}{\delta}\delta^{-\beta}\,\mathrm{d}\delta)\\ \leq &
C\beta^{-\beta}\int_0^1\frac{\omega(\delta)}{\delta}\,\mathrm{d}\delta+C|z|^{\beta}\\ \leq &C\beta^{-\beta}.
\end{split}
\end{equation}
For $I_{22},$ we need estimate $$|\frac{1}{|x|^{n-\beta}}-\frac{1}{|x-z|^{n-\beta}}|.$$
Adopting to  the  similar method to estimate $|\frac{x}{|x|}-\frac{x-z}{|x-z|}|$ as above, we get
\begin{equation*}\begin{split}
 |\frac{1}{|x|^{n-\beta}}-\frac{1}{|x-z|^{n-\beta}}|\le  C\frac{|z|}{|x-z|^{n-\beta+1}}.
\end{split}
\end{equation*}
Therefore, by using the boundness of $\Omega$ in \eqref{4-9-15}, we obtain
\begin{equation*}\begin{split}
I_{22}&\leq C|z|\int_{3|z|\leq|x-z|\leq \frac{1}{\beta}}\frac{1}{|x-z|^{n-\beta+1}}\,\mathrm{d}x
\\ \leq& C|z|\int_{3|z|}^{\frac{1}{\beta}}r^{\beta-2}\,\mathrm{d}r.
\\ \leq&C \frac{|z|^{\beta}}{1-\beta}\leq C \frac{\beta^{-\beta}}{1-\beta}
\end{split}
\end{equation*}
for $0<\beta<1$.

Collecting the estimate of $I_{11}$ and $I_{22}$
yields
\begin{equation}\label{I-E}
\begin{split}
|I|\le  \frac{C}{1-\beta}\beta^{-\beta}\le C.
\end{split}
\end{equation}
for   $0<\beta\le 1-\beta_0$ with $0<\beta_0<\frac12$, where $C$ is a constant depending on $\beta_0$. It is here that we need to impose the restriction $0<\beta\le 1-\beta_0$.

Concerning the term $J$, thanks to $|x|\geq|x+z|-|z|\geq 4|z|-|z|\geq 3|z|$, one has
\begin{equation}\label{J-E}
|J|\leq\int_{3|z|\leq|x|\leq4|z|}\frac{1}{|x|^{n-\beta}}\,\mathrm{d}x
\leq C\frac{1-(\frac34)^{-\beta}}{\beta}\beta^{-\beta}.
\end{equation}
Utilizing $|x|\leq|x+z|+|z|\leq 4|z|+|z|\leq 5|z|$, the term $K$ can be bounded by
\begin{equation}\label{K-E}
|K|\leq\int_{4|z|\leq|x|\leq5|z|}\frac{1}{|x|^{n-\beta}}\,\mathrm{d}x
\leq C\frac{1-(\frac45)^{-\beta}}{\beta}\beta^{-\beta}.
\end{equation}
As for the term $L$, the fact that $\frac{2}{\beta}\ge |x|\ge |x+z|-|z|\ge \frac{2}{\beta}-\frac{1}{2\beta}=\frac{3}{2\beta}$ enables us to conclude
\begin{equation}\label{L-E}
\begin{split}
|L| \leq\int_{\frac{3}{2\beta}\leq|x|\leq\frac{2}{\beta}}\frac{1}{|x|^{n-\beta}}\,\mathrm{d}x
 \leq\frac{1}{\beta}\left(\Big(\frac{2}{\beta}\Big)^\beta-\Big(\frac{3}{2\beta}\Big)^\beta\right)
 \leq\frac{2^\beta-(\frac{3}{2})^\beta}{\beta}\beta^{-\beta}.
\end{split}
\end{equation}
Substituting \eqref{I-E}-\eqref{L-E} into \eqref{K1-F321}, we readily obtain that there exists an absolute constant $C>0$ such that
\begin{equation}\label{K12}
\Big|\int_{\frac{2}{|y|}\leq|x|\leq\frac{2}{\beta}} e^{2\pi ix\cdot y}K_1(x)\,\mathrm{d}x\Big|\le \frac{C}{1-\beta}
\end{equation}
for $0<\beta<1$.
In view of \eqref{K1-F311}, \eqref{K12} and \eqref{K1-F31}, there exists an absolute constant $C>0$ such that
\begin{equation}\label{K1-F3}
\big|\widehat{K_1}(y)\big|\le \frac{C}{1-\beta}, \,\ \quad 0<\beta<1,\,\,\quad |y|>\beta.
\end{equation}
Combining  \eqref{K1-F1}, \eqref{K1-F2} with \eqref{K1-F3}, we finish the proof of \eqref{K1-F}. Applying \eqref{K1-F}, one has
\begin{equation*}\label{T1-1+}\begin{split}
\|T_1f\|_{L^2}=\|\widehat{K_1}\widehat{f}\|_{L^2}\le C\|\widehat{f}\|_{L^2}=\frac{C}{1-\beta}\|f\|_{L^2},
\end{split}
\end{equation*}
for $0<\beta<1$.
 Hence \eqref{T1-1} is proved and the proof of the lemma is complete.
\end{proof}

\section{Proof of Main Result}

In this section, our main task is to prove Theorem \ref{theorem21}. Then the proof of  our main result Theorem \ref{theorem2} will be resulted from Theorem \ref{theorem21} and Lemma \ref{uni-est1}.  The following lemma is crucial to prove Theorem \ref{theorem21}.
\begin{lemma}\label{MainLemma}
 Let $Q$ be a cube in $\mathbb{R}^n$ and $f\in L^2(\mathbb{R}^n)$ with ${\rm supp}  f\subset {\mathbb{R}^n}\setminus 4Q$. Suppose that  $\mathcal{M}(|T_1 f|^2)(x_0)\le a^2$ and $\mathcal{M}(f^2)(x_0)\le b^2$ for a point $x_0\in 3Q$, where $a$ and $b$ are two positive constants. Then, there exists an absolute constant $C>0$ depending on $n, B_1$ and $B_2$ but not on $\beta$, such that
 \begin{equation}\label{Main1}
 \mathcal{M}(|T_1 f|^2)(x)\le \max\{5^na^2, (a+ Cb)^2\}
\end{equation}
holds for any $x\in Q$ and  $0<\beta<\frac n2$.
\end{lemma}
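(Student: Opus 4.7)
The plan is to bound $\frac{1}{|Q'|}\int_{Q'}|T_1 f|^2$ for each cube $Q'\ni x$, splitting by the side-length $\ell$ of $Q'$ relative to $r:=\ell(Q)$.

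When $\ell \ge r$: since $x\in Q$ and $x_0\in 3Q$ share a common centre cube, $|x-x_0|_\infty\le r/2+3r/2=2r\le 2\ell$, so the centre $x_{Q'}$ of $Q'$ satisfies $|x_{Q'}-x_0|_\infty\le \ell/2+2r\le 5\ell/2$; thus $x_0\in 5Q'$. Hence $\frac{1}{|Q'|}\int_{Q'}|T_1f|^2\le 5^n\cdot\frac{1}{|5Q'|}\int_{5Q'}|T_1f|^2\le 5^n\mathcal M(|T_1f|^2)(x_0)\le 5^n a^2$, which matches the first bound in \eqref{Main1}.

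When $\ell<r$: $Q'\subset 3Q$, so every $y\in Q'$ satisfies $|y-z|\ge r/2$ for $z\in\text{supp}\,f$ and $|y-x_0|\le C(n)\,r$. I aim for the pointwise bound $|T_1 f(y)|\le a+Cb$ on $Q'$, which will give $\frac{1}{|Q'|}\int_{Q'}|T_1f|^2\le (a+Cb)^2$. Writing $|T_1f(y)|\le |T_1f(x_0)|+|T_1f(y)-T_1f(x_0)|$ and taking $x_0$ to be a Lebesgue point of $|T_1f|^2$ (an a.e.\ restriction that is harmless when this lemma is later applied inside a Calder\'on--Zygmund decomposition), the first summand is bounded by $a$. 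The crux is therefore $|T_1f(y)-T_1f(x_0)|\le Cb$ uniformly in $y\in Q'$, with $C$ depending only on $n, B_1, B_2$ and independent of $\beta\in(0,n/2)$.

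For this difference estimate I would set $h=y-x_0$, $w=x_0-z$, and decompose $K_1(w+h)-K_1(w)$ into three pieces reflecting the changes in $\Omega$, in $|w|^{\beta-n}$, and in $\chi_\beta(|w|)$, exactly as in the proof of Lemma \ref{uni-est21}. For $|w|\ge 2|h|$ these are bounded respectively by $\omega(C|h|/|w|)\,|w|^{\beta-n}$, $|h|\,|w|^{\beta-n-1}$, and $\beta|h|\,|w|^{\beta-n}\mathbf{1}_{\{|w|\sim 1/\beta\}}$; on the short-range region $r/2\le|w|\le Cr$ I just use the crude bound $|K_1|\le |w|^{\beta-n}$ on each term separately. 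Splitting $\text{supp}\,f$ into this short-range piece and the dyadic annuli $A_k=\{2^kr\le |x_0-z|<2^{k+1}r\}$ (truncated at $2^kr\lesssim 1/\beta$ by the $\chi_\beta$-support), the Cauchy--Schwarz consequence $\mathcal M(|f|)(x_0)\le b$ of $\mathcal M(f^2)(x_0)\le b^2$ yields $\int_{A_k}|f|\lesssim (2^kr)^nb$. Summing these products, the $\omega$-piece is controlled by the Dini integral $\int_0^1\omega(\delta)/\delta\,d\delta=B_2$, and the remaining geometric sums are kept uniform in $\beta$ via the truncation at scale $1/\beta$ and the uniform estimate $(2/\beta)^\beta\le C(n)$ valid on $(0,n/2)$.

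The hard part will be preserving this uniformity in $\beta$ in the dyadic sums $\sum_k(2^kr)^\beta\omega(C 2^{-k})$ and $\sum_k |h|(2^kr)^{\beta-1}$: the non-standard scaling $|w|^{\beta-n}$ and the $\beta$-dependent support of $\chi_\beta$ interact so that naive geometric-series estimates lose factors like $1/(1-\beta)$ near $\beta=1$, and must instead be evaluated with the truncation $2^kr\sim 1/\beta$ combined with the bound on $(2/\beta)^\beta$. Finally, the $\varepsilon$-regularization is inactive whenever $\varepsilon\le r/2$ (since $|y-z|,|x_0-z|\ge r/2$ automatically on $\text{supp}\,f$), trivial when $\varepsilon\ge 2/\beta$ (as then $T_1f\equiv 0$), and in the narrow intermediate range absorbed by a direct estimate on the symmetric difference of the $\varepsilon$-balls at $y$ and $x_0$.
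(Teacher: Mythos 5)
Your proposal is essentially the same proof as the paper's: the same dichotomy on whether the probing cube lies inside $3Q$ (your $\ell<r$ case) or not (your $\ell\ge r$ case, where you correctly verify $x_0\in 5Q'$), the same target pointwise bound $|T_1f|\le a+Cb$ on $3Q$ via the triangle inequality at $x_0$, and the same three-way split of the kernel difference into the $|\cdot|^{\beta-n}$-part, the $\Omega$-part (controlled by the Dini integral), and the $\chi_\beta$-part, summed over dyadic annuli. The one technical difference is bookkeeping: the paper first applies H\"older with exponents $\frac{n}{n-\beta}$ and $\frac{n}{\beta}$ to peel off the $\chi_\beta$ factor, producing the uniform constant $(2/\beta)^{\beta}$ once and for all, and then uses a nested H\"older against $\mathcal{M}(f^2)(x_0)\le b^2$; you instead bound $\int_{A_k}|f|\lesssim (2^kr)^n b$ directly by Cauchy--Schwarz and carry the $\chi_\beta$-support as a cutoff on the summation range. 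Your route does work: the identity $r(2^kr)^{\beta-1}=2^{-k}(2^kr)^{\beta}\le 2^{-k}(2/\beta)^{\beta}\lesssim 2^{-k}$ delivers the uniformity in $\beta$ that you flag as the hard part, and spelling it out is the only real gap left in your sketch. You are also more careful than the paper about the $\varepsilon$-truncation, which the paper silently drops when writing out $T_1f(x)-T_1f(x_0)$; your three-range case analysis ($\varepsilon\le r/2$, $\varepsilon\ge 2/\beta$, intermediate) is a legitimate fix. The Lebesgue-point caveat is unnecessary, however: since $\mathrm{supp}\,f\subset\mathbb{R}^n\setminus 4Q$ and $x_0\in 3Q$, $T_1f$ is continuous on $3Q$, so $|T_1f(x_0)|\le\mathcal{M}(T_1f)(x_0)$ holds automatically.
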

\begin{proof} [Proof] It is clear that $(T_1 f)(x)$ is well defined for any $x\in 3Q$ and it is claimed that
\begin{equation}\label{4-6-1}
|T_1 f|(x)\le a+Cb, \ x\in 3Q.
\end{equation}
Here and in the following, we denote $C$ an absolute positive constant depending on $n, B_1$ and $B_2$ but not on $\beta$. In fact,
\begin{equation}\label{4-6-2}
\begin{split}
|T_1 f|(x)\le & |T_1 f|(x_0)+|T_1 f(x)-T_1 f(x_0)|\\
\le & \mathcal{M}(T_1 f)(x_0)+|T_1 f(x)-T_1 f(x_0)|.
\end{split}
\end{equation}
By H$\ddot{o}$lder inequality,
\begin{equation}\label{4-6-3}
\mathcal{M}(T_1 f)(x_0)\le \sqrt{\mathcal{M}(|T_1 f|^2)(x_0)}\le a.
\end{equation}
In view of \eqref{4-6-2} and \eqref{4-6-3}, to prove \eqref{4-6-1}, we only need to prove
\begin{equation}\label{4-6-6}
|T_1 f(x)-T_1 f(x_0)|\le Cb.
\end{equation}

 Note that
\begin{equation}\label{2}
\begin{split}
T_1 f(x)-T_1 f(x_0)=&\int_{\mathbb{R}^n\setminus 4Q}\Omega(x-y)(\frac{1}{|x-y|^{n-\beta}}-\frac{1}{|x_0-y|^{n-\beta}})\chi_\beta(|x-y|)f(y) dy\\
+&\int_{\mathbb{R}^n\setminus 4Q}\frac{\Omega(x-y)-\Omega(x_0-y)}{|x_0-y|^{n-\beta}}\chi_\beta(|x-y|)f(y) dy\\
+&\int_{\mathbb{R}^n\setminus 4Q}\frac{\Omega(x_0-y)}{|x_0-y|^{n-\beta}}(\chi_\beta(|x-y|)-\chi_\beta(|x_0-y|))f(y) dy\\
\equiv& I_1+I_2+I_3.
\end{split}
\end{equation}

We first estimate $I_1$. It holds that
\begin{equation}\label{I1-1}
\begin{split}
|I_1|=&|\int_{\mathbb{R}^n\setminus 4Q}\Omega(x-y)(\frac{1}{|x-y|^{n-\beta}}-\frac{1}{|x_0-y|^{n-\beta}})\chi_\beta(|x-y|)f(y) dy|\\
\le &C(\int_{\mathbb{R}^n\setminus 4Q}(\frac{1}{|x-y|^{n-\beta}}-\frac{1}{|x_0-y|^{n-\beta}})^{\frac{n}{n-\beta}}|f(y)|^
{\frac{n}{n-\beta}} dy)^{\frac{n-\beta}{n}}(\int_{\mathbb{R}^n\setminus 4Q} |\chi_\beta(|x-y|)|^{\frac{n}{\beta}}dy)^{\frac{\beta}{n}}\\
\le& C(\displaystyle{\sum_{m=2}^\infty}\int_{2^{m+1}Q\setminus 2^mQ}(\frac{1}{|x-y|^{n-\beta}}-\frac{1}{|x_0-y|^{n-\beta}})^{\frac{n}{n-\beta}}
|f(y)|^{\frac{n}{n-\beta}} dy)^{\frac{n-\beta}{n}} [(\frac{2}{\beta})^n]^{\frac{\beta}{n}}\\
\le& C(\displaystyle{\sum_{m=2}^\infty}\int_{2^{m+1}Q\setminus 2^mQ}(\frac{1}{|x-y|^{n-\beta}}-\frac{1}{|x_0-y|^{n-\beta}})^{\frac{n}{n-\beta}}
|f(y)|^{\frac{n}{n-\beta}} dy)^{\frac{n-\beta}{n}}.
\end{split}
\end{equation}
Then for $x\in 3Q, y\in 2^{m+1}Q\setminus 2^mQ, m=2, 3, \cdots $, by Lagrange's mean value theorem, one has
\begin{equation}\label{4-2-1}
|\frac{1}{|x-y|^{n-\beta}}-\frac{1}{|x_0-y|^{n-\beta}}|\le\displaystyle{\sup_{t\in[0,1]}}\frac{(n-\beta)|x-x_0|}{|tx_0+(1-t)x-y|^{n+1-\beta}}.
\end{equation}
Moreover, one has
\begin{equation}\label{4-2-3}
\frac{|y-\bar y|}{|z-y|}\le 4
\end{equation}
for any $z\in 3Q,  y\in 2^{m+1}Q\setminus 2^mQ, m=2, 3, \cdots$, where $\bar y$ is the center of $Q$, and
\begin{equation}\label{4-2-3+}
|x-x_0|\le 3\sqrt{n}l,
\end{equation}
for $x, x_0\in 3Q$, where $l$ denotes the side-length of $Q$. It follows from \eqref{4-2-1}, \eqref{4-2-3} and \eqref{4-2-3+} that
\begin{equation}\label{4-2-6}
\begin{split}
|\frac{1}{|x-y|^{n-\beta}}-\frac{1}{|x_0-y|^{n-\beta}}|^{\frac{n}{n-\beta}}\le & [\frac{3\cdot 4^{n+1-\beta}n\sqrt{n}l}{|y-\bar y|^{n+1-\beta}}]^{\frac{n}{n-\beta}}\le [\frac{3\cdot 4^{n+1-\beta}n\sqrt{n}l}{(2^ml)^{n+1-\beta}}]^{\frac{n}{n-\beta}}\\
 = & \frac{(3\cdot 4^{n+1-\beta}n\sqrt{n}l)^{\frac{n}{n-\beta}}}{(2^ml)^{n}(2^m)^{\frac{n}{n-\beta}}}\le \frac{C2^{-m}}{|2^{m+1}Q|}.
\end{split}
\end{equation}
Thanks to \eqref{4-2-6}, we obtain
\begin{equation}\label{I1-2}
\begin{split}
&\displaystyle{\sum_{m=2}^\infty}\int_{2^{m+1}Q\setminus 2^mQ}|\frac{1}{|x-y|^{n-\beta}}-\frac{1}{|x_0-y|^{n-\beta}}|^{\frac{n}{n-\beta}}
|f(y)|^{\frac{n}{n-\beta}} dy\\
\le &\displaystyle{\sum_{m=2}^\infty} \frac{C2^{-m}}{|2^{m+1}Q|}\int_{2^{m+1}Q} |f(y)|^{\frac{n}{n-\beta}} dy\\
\le &C(\mathcal{M}(f^2)(x_0))^{\frac{n}{2(n-\beta)}}.
\end{split}
\end{equation}
Substituting \eqref{I1-2} into \eqref{I1-1} arrives at
\begin{equation}\label{I1-3}
\begin{split}
|I_1|\le C(\mathcal{M}(f^2)(x_0))^{\frac{1}{2}}\le Cb.
\end{split}
\end{equation}
Next we estimate $I_2$. Note that
\begin{equation}\label{I2-1}
\begin{split}
|I_2|=&|\int_{\mathbb{R}^n\setminus 4Q}\frac{\Omega(x-y)-\Omega(x_0-y)}{|x_0-y|^{n-\beta}}\chi_\beta(|x-y|)f(y) dy|\\
\le & (\int_{\mathbb{R}^n\setminus 4Q}\frac{|\Omega(x-y)-\Omega(x_0-y)|^{\frac{n}{n-\beta}}}{|x_0-y|^{n}}|f(y)|^{\frac{n}{n-\beta}} dy)^{\frac{n-\beta}{n}}(\int_{\mathbb{R}^n\setminus 4Q} |\chi_\beta(|x-y|)|^{\frac{n}{\beta}}dy)^{\frac{\beta}{n}}\\
\le & C(\int_{\mathbb{R}^n\setminus 4Q}\frac{|\Omega(x-y)-\Omega(x_0-y)|}{|x_0-y|^{n}}|f(y)|^{\frac{n}{n-\beta}} dy)^{\frac{n-\beta}{n}}[(\frac{2}{\beta})^n]^{\frac{\beta}{n}}\\
\le & C(\int_{\mathbb{R}^n\setminus 4Q}\frac{|\Omega(x-y)-\Omega(x_0-y)|}{|x_0-y|^{n}}|f(y)|^{\frac{n}{n-\beta}} dy)^{\frac{n-\beta}{n}}.
\end{split}
\end{equation}
Similar to \eqref{4-2-1}, for $x\in 3Q, y\in 2^{m+1}Q\setminus 2^mQ, m=2, 3, \cdots $, by Lagrange's mean value theorem, one has
\begin{equation}\label{4-2-2}
|\frac{x-y}{|x-y|}-\frac{x_0-y}{|x_0-y|}|\le\displaystyle{\sup_{t\in[0,1]}}\frac{2|x-x_0|}{|tx_0+(1-t)x-y|^{n+1}},
\end{equation}
Applying \eqref{4-2-3} and \eqref{4-2-3+} yields
\begin{equation}\label{4-2-7}
\begin{split}
&|\Omega(x-y)-\Omega(x_0-y)|=|\Omega(\frac{x-y}{|x-y|})-\Omega(\frac{x_0-y}{|x_0-y|})|\\
\le &\omega(|\frac{x-y}{|x-y|}-\frac{x_0-y}{|x_0-y|}|)\le \omega(\frac{24\sqrt{n}l}{|y-\bar y|})\le \omega(\frac{24\sqrt{n}}{2^{m-1}}).
\end{split}
\end{equation}
Thanks to \eqref{4-2-7}, one has, for $0<\beta<\frac n2$,
\begin{equation}\label{I2-2}
\begin{split}
&\int_{\mathbb{R}^n\setminus 4Q}\frac{|\Omega(x-y)-\Omega(x_0-y)|}{|x_0-y|^{n}}|f(y)|^{\frac{n-\beta}{n}} dy\\
=&\displaystyle{\sum_{m=2}^\infty}\int_{2^{m+1}Q\setminus 2^mQ} \frac{|\Omega(x-y)-\Omega(x_0-y)|}{|x_0-y|^{n}}|f(y)|^{\frac{n-\beta}{n}} dy\\
&\le \displaystyle{\sum_{m=2}^\infty} \omega(\frac{24\sqrt{n}}{2^{m-1}})\int_{2^{m+1}Q\setminus 2^mQ}\frac{4^n|f(y)|^{\frac{n-\beta}{n}}}{|y-\bar y|^n} dy\\
&\le \displaystyle{\sum_{m=2}^\infty} \frac{2^{3n}\omega(\frac{24\sqrt{n}}{2^{m-1}})}{|2^{m+1}Q|}\int_{2^{m+1}Q} |f(y)|^{\frac{n-\beta}{n}} dy\\
&\le C(\mathcal{M}(f^2)(x_0))^{\frac{n}{2(n-\beta)}}\displaystyle{\sum_{m=2}^\infty} \omega(\frac{24\sqrt{n}}{2^{m-1}}).
\end{split}
\end{equation}
For any fixed $K>0, 0<\tau<1$, direct calculations give
\begin{equation}\label{I2-3}
\begin{split}
\ln\frac{1}{K\tau}\displaystyle{\sum_{m=2}^\infty} \omega(K\tau^{m-1})\le \displaystyle{\sum_{m=2}^\infty} \int_{K\tau^m}^{K\tau^{m-1}}\frac{\omega(t)}{t} dt=\int_0^{K\tau} \frac{\omega(t)}{t} dt.
\end{split}
\end{equation}
Taking $K=24\sqrt{n}, \tau=\frac12$ in \eqref{I2-3}, one has
\begin{equation}\label{I2-6}
\begin{split}
\displaystyle{\sum_{m=2}^\infty} \omega(\frac{24\sqrt{n}}{2^{m-1}})\le C\int_0^{12\sqrt{n}} \frac{\omega(t)}{t} dt\le C(\int_0^1 \frac{\omega(t)}{t} dt+\int_1^{12\sqrt{n}} \frac{\omega(t)}{t} dt)\le C.
\end{split}
\end{equation}
It follows from \eqref{I2-1},\eqref{I2-2} and \eqref{I2-6} that, for $0<\beta<\frac n2$,
\begin{equation}\label{I2-7}
\begin{split}
|I_2|\le C(\mathcal{M}(f^2)(x_0))^{\frac{1}{2}}\le Cb.
\end{split}
\end{equation}
Now we estimate $I_3$. Note that
\begin{equation}\label{I3-1}
\begin{split}
|I_3|= &\int_{\mathbb{R}^n\setminus 4Q}\frac{\Omega(x_0-y)}{|x_0-y|^{n-\beta}}(\chi_\beta(|x-y|)-\chi_\beta(|x_0-y|))f(y) dy\\
= &|\displaystyle{\sum_{m=2}^\infty}\int_{2^{m+1}Q\setminus 2^mQ}\frac{\Omega(x_0-y)}{|x_0-y|^{n-\beta}})(\chi_\beta(|x-y|)-\chi_\beta(|x_0-y|))f(y) dy|\\
\le &\displaystyle{\sum_{m=2}^\infty}(\int_{2^{m+1}Q\setminus 2^mQ}\frac{1}{|x_0-y|^{n}}|f(y)|^{\frac{n}{n-\beta}} dy)^{\frac{n-\beta}{n}}\\
&\cdot(\int_{2^{m+1}Q\setminus 2^mQ}|\chi_\beta(|x-y|)-\chi_\beta(|x_0-y|)|^{\frac{n}{\beta}}dy)^{\frac{\beta}{n}}.
\end{split}
\end{equation}
Since
$$
2^ml-3l\le |y-\bar y|-|x_0-\bar y|\le |x_0-y|\le |y-\bar y|+|x_0-\bar y|\le 2^{m+1}l+3l,
$$
one has $\frac{1}{\beta}\sim |x_0-y|\sim 2^ml$ and similarly $\frac{1}{\beta}\sim |x-y|\sim 2^ml$, and $\beta l\sim \frac{1}{2^m}$. Therefore, for $0<\beta<\frac{n}{2}$, it yields
\begin{equation}\label{I3-2}
\begin{split}
|I_3|\le C &\displaystyle{\sum_{m=2}^\infty}(\frac{1}{(2^ml)^n} \int_{2^{m+1}Q\setminus 2^mQ}|f(y)|^{\frac{n}{n-\beta}} dy)^{\frac{n-\beta}{n}}\cdot (\int_{2^{m+1}Q\setminus 2^mQ}|\chi_\beta(|x-y|)-\chi_\beta(|x_0-y|)|^{\frac{n}{\beta}}dy)^{\frac{\beta}{n}}\\
\le &  C\displaystyle{\sum_{m=2}^\infty}(\frac{2^n}{(2^{m+1}l)^n} \int_{2^{m+1}Q}|f(y)|^{\frac{n}{n-\beta}} dy)^{\frac{n-\beta}{n}}\\
&\cdot (\int_{2^{m+1}Q\setminus 2^mQ}|\chi'_\beta(t_0|x-y|+(1-t_0)|x_0-y|)|^{\frac{n}{\beta}}|x-x_0|^{\frac{n}{\beta}} dy)^{\frac{\beta}{n}}\\
\le & C \displaystyle{\sum_{m=2}^\infty}(\frac{1}{(2^{m+1}l)^n} \int_{2^{m+1}Q}|f(y)|^2 dy)^{\frac{1}{2}}\cdot (\int_{2^{m+1}Q\setminus 2^mQ}(\beta l)^{\frac{n}{\beta}} dy)^{\frac{\beta}{n}}\\
\le & C(\mathcal{M}(f^2)(x_0))^{\frac{1}{2}}\displaystyle{\sum_{m=2}^\infty} (\int_{2^{m+1}Q\setminus 2^mQ}(\beta l)^{\frac{n}{\beta}} dy)^{\frac{\beta}{n}}\\
\le & C(\mathcal{M}(f^2)(x_0))^{\frac{1}{2}}\displaystyle{\sum_{m=2}^\infty} \frac{1}{2^m}(2^ml)^\beta\\
\le & C(\mathcal{M}(f^2)(x_0))^{\frac{1}{2}}\displaystyle{\sum_{m=2}^\infty} \frac{1}{2^m}(\frac{1}{\beta})^\beta\\
\le & C(\mathcal{M}(f^2)(x_0))^{\frac{1}{2}}\le Cb,
\end{split}
\end{equation}
where $t_0\in (0,1)$ in the second inequality.

Combining \eqref{2} with \eqref{I1-3}, \eqref{I2-7} and \eqref{I3-2}, we finish the proof of \eqref{4-6-6} and hence the claim \eqref{4-6-1} holds true.

Now we prove \eqref{Main1}. Fix $x\in Q$ and let $\hat{Q} $ be any cube containing $x$. If $\hat{Q}\subseteq 3Q$, then it follows from \eqref{4-6-1} that
\begin{equation}\label{4-6-9}
\begin{split}
\frac{1}{\hat{Q}}\int_{\hat{Q}} |T_1 f|^2(y) dy\le (a+Cb)^2.
\end{split}
\end{equation}
If $\hat{Q}\nsubseteq 3Q$, then $x_0 \in 5\hat{Q}$ and in view of $\mathcal{M}(|T_1 f|^2)(x_0)\le a^2$, we have
\begin{equation}\label{4-6-10}
\begin{split}
\frac{1}{\hat{Q}}\int_{\hat{Q}} |T_1 f|^2(y) dy\le \frac{5^n}{5\hat{Q}}\int_{5\hat{Q}} |T_1 f|^2(y) dy\le 5^na^2.
\end{split}
\end{equation}
Hence \eqref{Main1} follows from \eqref{4-6-9} and \eqref{4-6-10}. The proof of the lemma is complete.
\end{proof}

In the following lemma, the constants $a$ and $b$ in Lemma \ref{MainLemma} will be fixed as $a=1$ and $b=2$ respectively, and we denote
\begin{equation}\label{N}
\begin{split}
\frac{N^2}{4}=\max\{2\cdot 5^n, (2+C)^2\},
\end{split}
\end{equation}
 which is the absolute constant appeared in \eqref{Main1} with $a=1$ and $b=2$ accordingly. Then, it holds
\begin{lemma}\label{MainLemma2}
 Let $\tilde{Q}$ be a cube in $\mathbb{R}^n$ and $Q$ be one of its dyadic cubes. Assume that $f\in L^2(\mathbb{R}^n)$. Then for any $\mu>0$, we can choose a $0<\delta\le 1$ depending on $\mu$ such that
 \begin{equation}\label{Main2}
 |\{x\in Q: \mathcal{M}(|T_1 f|^2)(x) >N^2\}|\le \mu |Q|
\end{equation}
for $0<\beta< 1$, if
\begin{equation}\label{Main3}
 |\{x\in \tilde{Q}: \mathcal{M}(|T_1 f|^2)(x)\le 1\}\cap \{x\in \tilde{Q}: \mathcal{M}(f^2)(x)\le \delta^2\}|>\frac{1}{2}|\tilde{Q}|.
\end{equation}
\end{lemma}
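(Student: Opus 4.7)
The plan is to follow the Caffarelli--Peral geometric strategy. I would split $f = f_1 + f_2$ with $f_1 = f\chi_{4Q}$ and $f_2 = f - f_1$, so that ${\rm supp}\, f_2 \subset \mathbb{R}^n \setminus 4Q$ makes $f_2$ admissible for Lemma \ref{MainLemma}. The pointwise inequality $|T_1 f|^2\le 2|T_1 f_1|^2+2|T_1 f_2|^2$ then splits the problem into two sub-tasks: controlling $\mathcal{M}(|T_1 f_2|^2)$ pointwise on $Q$ through Lemma \ref{MainLemma}, and bounding the measure of the level set of $\mathcal{M}(|T_1 f_1|^2)$ through the weak $(1,1)$ bound of Lemma \ref{MF} combined with the $L^2$ estimate of Lemma \ref{uni-est21}. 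A geometric observation I will use is that, since $Q$ is a dyadic subcube of $\tilde Q$, one has $\tilde Q \subset 3Q$, so any point of $\tilde Q$ is a valid candidate for the base point $x_0\in 3Q$ needed in Lemma \ref{MainLemma}.

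To apply Lemma \ref{MainLemma} to $f_2$, I must first exhibit a base point $x_0 \in 3Q$ at which both $\mathcal{M}(|T_1 f_2|^2)(x_0)$ and $\mathcal{M}(f_2^2)(x_0)$ are of size $O(1)$. Let $G$ denote the good set in \eqref{Main3}, so that $|G|>|\tilde Q|/2$ and $G\subset\tilde Q\subset 3Q$. At every $x_0\in G$ one already has $\mathcal{M}(f_2^2)(x_0)\le\mathcal{M}(f^2)(x_0)\le\delta^2$ and
$$\mathcal{M}(|T_1 f_2|^2)(x_0)\le 2\mathcal{M}(|T_1 f|^2)(x_0)+2\mathcal{M}(|T_1 f_1|^2)(x_0)\le 2+2\mathcal{M}(|T_1 f_1|^2)(x_0),$$
so the task reduces to finding $x_0\in G$ at which $\mathcal{M}(|T_1 f_1|^2)$ is of bounded size. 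Since $\mathcal{M}(f^2)\le\delta^2$ at any $x_0\in\tilde Q\subset 4Q$ yields the averaging bound $\|f_1\|_2^2\le 4^n\delta^2|Q|$, Lemma \ref{uni-est21} gives $\|T_1 f_1\|_2^2\le C\delta^2|Q|$, and the weak $(1,1)$ bound of Lemma \ref{MF} applied to $|T_1 f_1|^2$ yields
$$\bigl|\{x:\mathcal{M}(|T_1 f_1|^2)(x)>1\}\bigr|\le C'\delta^2|Q|.$$
Choosing $\delta$ small enough makes this measure strictly less than $|\tilde Q|/4$, so its complement intersects $G$ in a set of positive measure and produces the required $x_0$.

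With such $x_0$ in hand, Lemma \ref{MainLemma} applied to $f_2$ yields $\mathcal{M}(|T_1 f_2|^2)(x)\le N^2/4$ uniformly on $Q$ (this is the content of the definition of $N$). Combining with $\mathcal{M}(|T_1 f|^2)\le 2\mathcal{M}(|T_1 f_1|^2)+2\mathcal{M}(|T_1 f_2|^2)$ gives
$$\{x\in Q:\mathcal{M}(|T_1 f|^2)(x)>N^2\}\subset\bigl\{x\in Q:\mathcal{M}(|T_1 f_1|^2)(x)>N^2/4\bigr\}.$$
A second use of the weak $(1,1)$ bound of Lemma \ref{MF}, together with Lemma \ref{uni-est21}, then yields
$$\bigl|\{x\in Q:\mathcal{M}(|T_1 f_1|^2)(x)>N^2/4\}\bigr|\le\frac{C\delta^2|Q|}{N^2},$$
which is at most $\mu|Q|$ provided $\delta^2\le\mu N^2/C$. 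This proves \eqref{Main2}.

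The main obstacle will be calibrating a single choice of $\delta=\delta(\mu)\in(0,1]$ to serve both purposes simultaneously: small enough so that $G\cap\{\mathcal{M}(|T_1 f_1|^2)\le 1\}$ is non-empty (producing the good base point), and small enough to force the final measure to be at most $\mu|Q|$. Since $N$ is an absolute constant and the $L^2$ operator norm in Lemma \ref{uni-est21} is uniform in $\beta\in(0,1-\beta_0]$, a single admissible $\delta=\delta(\mu)$ will work throughout the range of $\beta$ where Lemma \ref{MainLemma} and Lemma \ref{uni-est21} both apply.
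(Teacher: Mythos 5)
Your proof is correct and follows essentially the same route as the paper's: the decomposition $f=f_1+f_2$ with $f_1=f\chi_{4Q}$, using \eqref{Main3} plus a weak $(1,1)$ bound to locate a base point $x_0\in\tilde Q\subset 3Q$ where $\mathcal{M}(f_2^2)$ and $\mathcal{M}(|T_1f_2|^2)$ are $O(1)$, invoking Lemma~\ref{MainLemma} to get the uniform bound $N^2/4$ on $Q$, and finishing by a second weak $(1,1)$ estimate on $\mathcal{M}(|T_1f_1|^2)$. Your explicit remark that the admissible range of $\beta$ is really inherited from Lemma~\ref{uni-est21} (so $0<\beta\le 1-\beta_0$ rather than $0<\beta<1$ as literally stated) is a useful clarification the paper glosses over.
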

The proof of Lemma \ref{MainLemma2} can be seen in \cite{LW} and we give a sketch of proof as follows.

\begin{proof}[Proof]
Define
\begin{equation*}\label{}
f_1(x)=
\left\{
  \begin{array}{l}
    f(x), x\in 4Q,\\
    0, x\in \mathbb{R}^n\setminus 4Q,
  \end{array}
  \right.
\end{equation*}
and $f_2(x)=f(x)-f_1(x)$. It follows from \eqref{Main3} that there exists $x\in \tilde{Q}$ such that
$$
\mathcal{M}(f_1^2)(x)\le \mathcal{M}(f^2)(x)\le \delta^2.
$$
Consequently,
\begin{equation}\label{4-9-1}
\|f_1\|^2_{L^2(\mathbb{R}^n)}=\int_{4Q} |f_1|^2(y) dy\le \delta^2|4Q|.
\end{equation}
Then, using the weak $(1,1)$ type estimate of the maximal function in Lemma \ref{MF} and the strong $(2,2)$ type estimate of $T_1$ in Lemma  \ref{uni-est21}, we deduce
\begin{equation}\label{4-19-1}
\begin{split}
 |\{x\in \tilde{Q}: \mathcal{M}(|T_1 f_1|^2)(x)>1\}|\le & |\{x\in \mathbb{R}^n: \mathcal{M}(|T_1 f_1|^2)(x)>1\}|\\
 \le & C\|T_1 f_1\|_2^2\le C\|f_1\|_2^2\le C\delta^2|4Q|,
\end{split}
\end{equation}
and hence we can choose $0<\delta\le 1$ small enough such that
\begin{equation}\label{4-9-3}
|\{x\in \tilde{Q}: \mathcal{M}(|T_1 f|^2)(x)>1\}|<\frac{1}{2}|\tilde{Q}|
\end{equation}
for $0<\beta<1$. It follows from \eqref{Main3} and \eqref{4-9-3} that
\begin{equation}\label{4-9-6}
 |\{x\in \tilde{Q}: \mathcal{M}(|T_1 f|^2)(x)\le 1\}\cap \{x\in \tilde{Q}: \mathcal{M}(f^2)(x)\le \delta^2\}\cap \{x\in \tilde{Q}: \mathcal{M}(|T_1 f_1|^2)(x)\le 1\}|>0,
\end{equation}
which implies that there exists $x_0\in \tilde{Q}$ such that
\begin{equation}\label{4-9-7}
 \mathcal{M}(|T_1 f|^2)(x_0)\le 1,  \mathcal{M}(f^2)(x_0)\le \delta^2,  \mathcal{M}(|T_1 f_1|^2)(x_0)\le 1.
\end{equation}
Then one has
\begin{equation}\label{4-9-8}
  \mathcal{M}(f_2^2)(x_0)\le \mathcal{M}(f^2)(x_0)\le \delta^2.
\end{equation}
Moreover, since
$$
|T_1 f_2|^2=|T_1 f-T_1 f_1|^2\le 2(|T_1 f|^2+|T_1 f_1|^2),
$$
it yields
\begin{equation}\label{4-9-9}
  \mathcal{M}(|T_1 f_2|^2)(x_0)\le 2(\mathcal{M}(|T_1 f|^2)(x_0)+\mathcal{M}(|T_1 f_1|^2)(x_0))\le 4,
\end{equation}
where $x_0\in \tilde{Q}\subset 3Q$ is same as in \eqref{4-9-7}. Then due to Lemma \ref{MainLemma} and  \eqref{N}, one has
 \begin{equation}\label{4-9-10}
  \mathcal{M}(|T_1 f_2|^2)(x)\le \max\{2\cdot 5^n, (2+C)^2\}=\frac{N^2}{4}
\end{equation}
for any $x\in Q$.  Similar to \eqref{4-9-9}, one can show that
\begin{equation}\label{4-9-11}
  \mathcal{M}(|T_1 f|^2)(x)\le 2(\mathcal{M}(|T_1 f_1|^2)(x)+\mathcal{M}(|T_1 f_2|^2)(x))
\end{equation}
for any $x\in \mathbb{R}^n$, which implies that
\begin{equation*}\label{}
\begin{split}
 |\{x\in Q: \mathcal{M}(|T_1 f|^2)(x)>N^2\}|\le & |\{x\in Q: \mathcal{M}(|T_1 f_1|^2)(x)>\frac{N^2}{4}\}|\\
 + & |\{x\in Q: \mathcal{M}(|T_1 f_2|^2)(x)>\frac{N^2}{4}\}|\\
 =& |\{x\in Q: \mathcal{M}(|T_1 f_1|^2)(x)>\frac{N^2}{4}\}|\\
 \le &C_1\frac{4\|T_1 f_1\|_2^2}{N^2}\le \frac{C\|f_1\|_2^2}{N^2}\le  \frac{C\delta^2|4Q|}{N^2},
\end{split}
\end{equation*}
where the weak  (1,1) type estimate of $\mathcal{M}$ in Lemma \ref{MF} and \eqref{4-9-1} have been applied. Let $0<\delta\le 1$ be small enough and then the proof of the lemma is finished.
\end{proof}

With help of Calder$\acute{\rm o}$n-Zygmund decomposition, one can prove
\begin{lemma}\label{MainLemma3}
 Let $Q$ be a cube in $\mathbb{R}^n$, $\mu$ and $\delta$ be given by Lemma \ref{MainLemma2}. Assume that $f\in L^2(\mathbb{R}^n)$ satisfying
 \begin{equation}\label{Main4}
 |\{x\in Q: \mathcal{M}(|T_1 f|^2)(x) >N^2\}|\le \mu |Q|.
\end{equation}
Then
\begin{equation}\label{Main5}
\begin{split}
  &|\{x\in Q: \mathcal{M}(|T_1 f|^2)(x) >N^2\}|\\
 \le & 2\mu(|\{x\in Q: \mathcal{M}(|T_1 f|^2)(x)> 1\}|+|\{x\in Q: \mathcal{M}(f^2)(x)> \delta^2\}|).
 \end{split}
\end{equation}
\end{lemma}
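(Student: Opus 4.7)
The plan is to apply a dyadic Calder\'on--Zygmund decomposition not to the ``bad'' set $F:=\{x\in Q: \mathcal{M}(|T_1f|^2)(x)>N^2\}$ at level $\mu$, but instead to the set $H:=E_1\cup E_2$, where $E_1:=\{x\in Q: \mathcal{M}(|T_1f|^2)(x)>1\}$ and $E_2:=\{x\in Q: \mathcal{M}(f^2)(x)>\delta^2\}$, at level $\tfrac12$. Note that $F\subseteq H$ since $N>1$, and $|H|\le|E_1|+|E_2|$. First dispose of the trivial case $|H|\ge\tfrac12|Q|$: the hypothesis \eqref{Main4} then gives $|F|\le \mu|Q|\le 2\mu|H|\le 2\mu(|E_1|+|E_2|)$, so \eqref{Main5} is immediate.

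In the non-trivial case $|H|<\tfrac12|Q|$, I would run the standard dyadic Calder\'on--Zygmund algorithm on $\chi_H$ starting from $Q$ at level $\tfrac12$ (slightly perturbed to $\tfrac12-\epsilon$ if strict parent-density is needed, letting $\epsilon\to 0^+$ at the end). This produces pairwise disjoint dyadic subcubes $\{Q_k\}\subseteq Q$ with $|H\cap Q_k|>\tfrac12|Q_k|$, whose one-step dyadic parents $\tilde Q_k\subseteq Q$ satisfy $|H\cap \tilde Q_k|\le\tfrac12|\tilde Q_k|$, and such that $H\subseteq\bigcup_k Q_k$ modulo a null set. The parent condition rewrites as $|\tilde Q_k\setminus H|\ge\tfrac12|\tilde Q_k|$, which is exactly the hypothesis of Lemma \ref{MainLemma2} for the pair $(\tilde Q_k,Q_k)$; applying that lemma therefore yields $|F\cap Q_k|\le \mu|Q_k|$ for every $k$.

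Since $F\subseteq H\subseteq\bigcup_k Q_k$ a.e.\ and the $Q_k$ are disjoint, summing gives
\[
|F|=\sum_k|F\cap Q_k|\le \mu\sum_k|Q_k|<2\mu\sum_k|H\cap Q_k|\le 2\mu|H|\le 2\mu(|E_1|+|E_2|),
\]
which is \eqref{Main5}. The main conceptual point -- and essentially the only real obstacle -- is the choice to decompose along $H$ rather than $F$: a Calder\'on--Zygmund decomposition of $F$ at level $\mu$ would force one to control $\sum_k|H\cap\tilde Q_k|$ over CZ parents, which can nest at arbitrary dyadic depths and overlap without any absolute bound on the overlap count. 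The present choice instead places the controlling integral on the \emph{disjoint} stopping cubes $Q_k$ themselves, and so delivers the sharp multiplicative factor $2\mu$ appearing in \eqref{Main5}.
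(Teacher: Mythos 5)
Your proof is correct, and it is a genuinely different route from the ``modified Vitali'' argument the paper defers to in Li--Wang. The standard approach runs the dyadic Calder\'on--Zygmund stopping-time decomposition on the bad set $F=\{x\in Q:\mathcal{M}(|T_1f|^2)(x)>N^2\}$ at level $\mu$, with \eqref{Main4} serving precisely as the starting condition; Lemma \ref{MainLemma2} is then invoked in contrapositive form (``$|F\cap Q_k|>\mu|Q_k|$ implies $|\tilde Q_k\cap(E_1\cup E_2)|\ge\tfrac12|\tilde Q_k|$''), and because parents of selected cubes can coincide (siblings) or nest across scales, one must extract a maximal disjoint subfamily of the parents $\{\tilde Q_k\}$ before summing. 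You instead decompose $H=E_1\cup E_2$ at level $\tfrac12$: the parent condition then hands you hypothesis \eqref{Main3} of Lemma \ref{MainLemma2} directly (no contrapositive), and the stopping condition $|H\cap Q_k|>\tfrac12|Q_k|$ places the controlling mass on the already disjoint stopping cubes $\{Q_k\}$, so the maximal-parent extraction disappears and \eqref{Main4} is needed only to dispose of the trivial case $|H|\ge\tfrac12|Q|$. Both routes deliver the same sharp factor $2\mu$; yours is a little cleaner. Two small points to make explicit: the inclusion $F\subset E_1\subset H$ rests on $N^2\ge 8>1$ from \eqref{N}; and perturbing the stopping level to $\tfrac12-\epsilon$ requires $|H|<(\tfrac12-\epsilon)|Q|$ to start the decomposition, which indeed holds for $\epsilon$ small in your non-trivial case --- alternatively, adopt the convention that stopping cubes satisfy $|H\cap Q_k|\ge\tfrac12|Q_k|$ while parents satisfy the strict $|H\cap\tilde Q_k|<\tfrac12|\tilde Q_k|$, which avoids the limit $\epsilon\to 0^+$ altogether.
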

The proof of Lemma \ref{MainLemma3} is referred to \cite{LW} and we omit it here. The following lemma  follows directly from Lemma \ref{MainLemma3}.
\begin{lemma}\label{MainLemma5}
 Assume that $f\in L^2(\mathbb{R}^n)$. Then for any $\lambda>0$ and $0<\beta<1$,
 \begin{equation}\label{Main6}
\begin{split}
  &|\{x\in \mathbb{R}^n: \mathcal{M}(|T_1 f|^2)(x) >\lambda N^2\}|\\
 \le & 2\mu(|\{x\in Q: \mathcal{M}(|T_1 f|^2)(x)> \lambda\}|+|\{x\in Q: \mathcal{M}(f^2)(x)> \lambda\delta^2\}|).
 \end{split}
\end{equation}
\end{lemma}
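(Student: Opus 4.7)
The plan is to derive Lemma \ref{MainLemma5} from Lemma \ref{MainLemma3} by a two-step reduction: first a homogeneity rescaling to eliminate the parameter $\lambda$, and then an exhaustion of $\mathbb{R}^n$ by an increasing family of cubes on which the hypothesis of Lemma \ref{MainLemma3} can be verified via the $L^2$-boundedness of $T_1$.

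First I would exploit the linearity of $T_1$ and the homogeneity of $\mathcal{M}$. Setting $g = f/\sqrt{\lambda}$, one has $T_1 g = (T_1 f)/\sqrt{\lambda}$, so $|T_1 g|^2 = |T_1 f|^2/\lambda$ and likewise $g^2 = f^2/\lambda$. By the scaling property of the maximal function (which commutes with scalar multiplication under the square), the three level sets transform cleanly:
\begin{equation*}
\{\mathcal{M}(|T_1 f|^2) > \lambda N^2\} = \{\mathcal{M}(|T_1 g|^2) > N^2\}, \ \{\mathcal{M}(|T_1 f|^2) > \lambda\} = \{\mathcal{M}(|T_1 g|^2) > 1\},
\end{equation*}
and similarly $\{\mathcal{M}(f^2) > \lambda \delta^2\} = \{\mathcal{M}(g^2) > \delta^2\}$. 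Hence it suffices to prove \eqref{Main6} for $\lambda = 1$ (applied to $g$).

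Next, I would cover $\mathbb{R}^n$ by a nested sequence of cubes $Q_k$ centered at the origin with side length $2^k$, so that $Q_k \uparrow \mathbb{R}^n$. Since $g \in L^2(\mathbb{R}^n)$, Lemma \ref{uni-est21} gives $T_1 g \in L^2(\mathbb{R}^n)$, and hence by the weak $(1,1)$ part of Lemma \ref{MF} applied to $|T_1 g|^2 \in L^1(\mathbb{R}^n)$, the set $\{x \in \mathbb{R}^n : \mathcal{M}(|T_1 g|^2)(x) > N^2\}$ has finite Lebesgue measure, bounded by $C\|T_1 g\|_2^2 / N^2$. Therefore for all $k$ large enough, $|\{x \in Q_k : \mathcal{M}(|T_1 g|^2)(x) > N^2\}| \le \mu |Q_k|$, which is exactly the hypothesis \eqref{Main4} of Lemma \ref{MainLemma3}. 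Applying Lemma \ref{MainLemma3} to each such $Q_k$ yields
\begin{equation*}
|\{x \in Q_k : \mathcal{M}(|T_1 g|^2)(x) > N^2\}| \le 2\mu\bigl(|\{x \in Q_k : \mathcal{M}(|T_1 g|^2)(x) > 1\}| + |\{x \in Q_k : \mathcal{M}(g^2)(x) > \delta^2\}|\bigr).
\end{equation*}

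Finally, I would pass to the limit $k \to \infty$. Both sides are measures of monotone increasing sequences of sets (as $Q_k \uparrow \mathbb{R}^n$), so by continuity of Lebesgue measure from below, they converge to the analogous quantities over all of $\mathbb{R}^n$. Converting back from $g$ to $f$ using the rescaling identities of Step~1 gives exactly \eqref{Main6}. There is no serious obstacle in this argument, since all the substantive work has been done in Lemmas \ref{MainLemma}--\ref{MainLemma3} and Lemma \ref{uni-est21}; the only point requiring some care is verifying the smallness hypothesis of Lemma \ref{MainLemma3} for sufficiently large cubes, which is what the global $L^2$ bound on $T_1$ precisely provides.
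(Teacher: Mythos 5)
Your proof is correct and reaches the same conclusion as the paper's, but it handles the covering of $\mathbb{R}^n$ differently. The paper tiles $\mathbb{R}^n$ by a countable family of pairwise disjoint congruent cubes $Q_i$ (all of a fixed, sufficiently large side length), verifies the hypothesis \eqref{Main4} of Lemma \ref{MainLemma3} on each $Q_i$ using the finiteness of $|\{\mathcal{M}(|T_1 f|^2)>N^2\}|$, and then sums the resulting inequalities over $i$. You instead exhaust $\mathbb{R}^n$ by a nested increasing sequence of cubes $Q_k$, apply Lemma \ref{MainLemma3} on each $Q_k$ once $k$ is large enough, and pass to the limit by continuity of Lebesgue measure from below. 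Both arguments hinge on the same observation (the weak $(1,1)$ bound for $\mathcal{M}$ plus the strong $(2,2)$ bound for $T_1$ make the bad set have finite measure, so it eventually occupies a small fraction of any large cube), and both are equally elementary; the exhaustion-plus-limit version is perhaps a touch cleaner since it sidesteps any bookkeeping about the tiling and its measure-zero boundaries. One small remark in your favor: the paper writes the rescaling as ``replace $f$ by $f/\lambda$'' and writes $\|T_1 f\|_2/N^2$ in the weak-type bound; your version correctly uses $f/\sqrt{\lambda}$ and $\|T_1 g\|_2^2/N^2$, which is what the squared level sets actually require. (Note also that the stated conclusion \eqref{Main6} should read $\mathbb{R}^n$ rather than $Q$ on the right-hand side, as is clear from how the lemma is invoked in the proof of Theorem \ref{theorem21}; your argument naturally produces the $\mathbb{R}^n$ version.)
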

\begin{proof}[Proof]
By replacing $f$ by $\frac{f}{\lambda}$, it is sufficient to prove \eqref{Main6} with $\lambda=1$. Let
$$
\mathbb{R}^n=\displaystyle{\bigcup_{i=1}^\infty}\bar{Q_i},
$$
where $Q_i (i=1,2,\cdots)$ are cubes with the same side-length such that $Q_i\cap Q_j=\emptyset$ if $i\neq j$ and $\bar{Q_i}(i=1,2,\cdots)$ are the closures respectively. By Lemma \ref{MainLemma3}, it is only needed to prove
\begin{equation}\label{Main7}
 |\{x\in Q_i: \mathcal{M}(|T_1 f|^2)(x) >N^2\}|\le \mu |Q_i|
\end{equation}
for any $i=1,2,\cdots$. Since
\begin{equation*}\label{Main8}
 |\{x\in \mathbb{R}^n: \mathcal{M}(|T_1 f|^2)(x) >N^2\}|\le C\frac{\|T_1 f\|_2}{N^2}\le C\frac{\|f\|_2}{N^2},
\end{equation*}
we can choose $Q_i$ large enough such that \eqref{Main7} holds for any $i=1,2,\cdots$ and the proof of the lemma is finished.
\end{proof}

Now we are ready to prove our main results.
\begin{proof}[Proof of Theorem \ref{theorem21}]

In case of  $q=2$, it is proved in Lemma \ref{uni-est21}. In case of $2<q<\infty$, due to \eqref{Main6}, one has
\begin{equation}\label{4-13-1}
\begin{split}
 \int_{\mathbb{R}^n}(\mathcal{M}(|T_1 f|^2)(x))^{\frac{q}{2}} dx=& \frac{q}{2} \int_0^\infty t^{\frac{q}{2}-1}|\{x\in\mathbb{R}^n: \mathcal{M}(|T_1 f|^2)(x)>t\}| dt\\
 =& \frac q2N^q\int_0^\infty t^{\frac{q}{2}-1}|\{x\in\mathbb{R}^n: \mathcal{M}(|T_1 f|^2)(x)>N^2t\}| dt\\
 \le & \mu qN^q\int_0^\infty t^{\frac{q}{2}-1}(|\{x\in\mathbb{R}^n: \mathcal{M}(|T_1 f|^2)(x)>t\}| \\
 &+|\{x\in\mathbb{R}^n: \mathcal{M}(f^2)(x)>t\delta^2\}|) dt.
 \end{split}
\end{equation}
Let $\mu=\frac{1}{2qN^q}$. It follows that
\begin{equation}\label{4-13-2}
\begin{split}
 \int_{\mathbb{R}^n}(\mathcal{M}(|T_1 f|^2)(x))^{\frac{q}{2}} dx
 \le & \int_0^\infty t^{\frac{q}{2}-1}|\{x\in\mathbb{R}^n: \mathcal{M}(f^2)(x)>t\delta^2\}| dt\\
 \le & \frac{1}{\delta^q}\int_0^\infty t^{\frac{q}{2}-1}|\{x\in\mathbb{R}^n: \mathcal{M}(f^2)(x)>t\}| dt\\
 \le & \frac{1}{\delta^q} (C_{\frac q2})^{\frac q2} \int_\mathbb{R}^n |f|^q dx,
 \end{split}
\end{equation}
where in the last inequality we used the strong  $(\frac q2, \frac q2)$ type inequality of the maximal function in Lemma \ref{MF} for $2<q<\infty$. Since $|T_\varepsilon f|^2(x)\le \mathcal{M}(|T_1 f|^2)(x)$ a.e., \eqref{4-11-1} in Theorem \ref{theorem3} follows.

The case $1<q<2$ can be proved by using the duality method. It is referred to \cite{stein, YJ} for more details and we  omit it here. The proof of the theorem is complete.
\end{proof}

\begin{proof}[Proof of Theorem \ref{theorem2}]
Combining Lemma \ref{uni-est1} with Theorem \ref{theorem21}, we obtain Theorem \ref{theorem2} and the proof is complete.
\end{proof}

{\bf Acknowledgements.}
Q. Jiu was partially supported by the National Natural Science Foundation of
China (NNSFC) (No. 11671273, No.11931010), key research project of the Academy for Multidisciplinary Studies of CNU, and Beijing Natural Science Foundation (BNSF) (No. 1192001).
D. Li was partially supported by the National Natural Science Foundation of
China (NNSFC) (No. 11671316).
H. Yu was partially supported
by the National Natural Science Foundation of China (NNSFC) (No. 11901040), Beijing Natural Science Foundation (BNSF) (No. 1204030)
and   Beijing Municipal Education Commission (KM202011232020).

\end{document}